\documentclass[leqno,12pt]{amsart} 
\setlength{\textheight}{23cm}
\setlength{\textwidth}{16cm}
\setlength{\oddsidemargin}{0cm}
\setlength{\evensidemargin}{0cm}
\setlength{\topmargin}{0cm}
\usepackage{amssymb}
%
%
%
\theoremstyle{plain} 
\newtheorem{theorem}{\indent\sc Theorem}[section]
\newtheorem{lemma}[theorem]{\indent\sc Lemma}

\newtheorem{proposition}[theorem]{\indent\sc Proposition}

\theoremstyle{definition} 
\newtheorem{definition}[theorem]{\indent\sc Definition}
\newtheorem{remark}[theorem]{\indent\sc Remark}
\newtheorem{example}[theorem]{\indent\sc Example}

%

%

\begin{document}

\title[Constant terms of Eisenstein series over a totally real field]{Constant terms of Eisenstein series \\ over a totally real field} 

\author[T. Ozawa]{Tomomi Ozawa} 

\subjclass[2010]{ 
Primary 11F41; Secondary 11F30.
}
%
\keywords{ 
Hilbert modular forms, Fourier coefficients of automorphic forms. 
}
\address{
Mathematical Institute, Graduate School of Science, Tohoku University \\
6-3 Aramaki Aza-Aoba, Aoba-ku, Sendai 980-8578 \\
Japan
}
\email{sb2m06@math.tohoku.ac.jp}

\maketitle

\begin{abstract}
In this paper, we compute constant terms of Eisenstein series defined over a totally real field at all cusps. We explicitly describe the constant terms of Eisenstein series at each equivalence class of cusps in terms of special values of Hecke $L$-functions. This investigation is motivated by M. Ohta's work on congruence modules related to Eisenstein series defined over the field of rational numbers. 
\end{abstract}

\tableofcontents

\section*{Introduction} 
In this paper, we compute constant terms of Eisenstein series defined over a totally real field, at various cusps. In his paper \cite{O} published in 2003, M. Ohta computed the constant terms of Eisenstein series of weight $2$ over ${\mathbb{Q}}$, at all equivalence classes of cusps. As for Eisenstein series defined over a totally real field, S. Dasgupta, H. Darmon and R. Pollack calculated the constant terms at particular (not all) equivalence classes of cusps in 2011 \cite{DDP}. We compute constant terms of Eisenstein series defined over a general totally real field at \emph{all} equivalence classes of cusps, and describe them  explicitly in terms of Hecke $L$-functions. 

This investigation is motivated by Ohta's work \cite{O} on congruence modules related to Eisenstein series defined over ${\mathbb{Q}}$. Let $p \geq 5$ be a prime number. In his paper \cite{O} Ohta defined and computed congruence modules attached to a pair of an exact sequence of $\Lambda$-adic forms and its Hecke-equivariant splitting over the fractional field of $\Lambda$, where $\Lambda$ is the Iwasawa algebra $\Lambda={\mathbb{Z}}_p[[1+p{\mathbb{Z}}_p]]$. In his computation, the constant terms of Eisenstein series over $\mathbb{Q}$ at all equivalence classes of cusps of $\Gamma_1(Np^r)$ are necessary. The congruence module which he considered can be described by using Kubota-Leopoldt $p$-adic $L$-functions, which is obtained by $p$-adic interpolation of a family of Dirichlet $L$-functions. As is well known, the constant terms of Eisenstein series defined over ${\mathbb{Q}}$ can be expressed in terms of special values of Dirichlet $L$-functions. Indeed the congruence module above was computed by using values of constant terms obtained from a $p$-adic analytic family of Eisenstein series. 

It is expected to extend Ohta's work to the case of totally real fields, since his result has been applied to several important problems in number theory, such as problems concerning $\mu$-invariant of the cyclotomic ${\mathbb{Z}}_p$-extension over a number field (see \cite{I} for details). 

We explain the main result of this paper. We consider Hilbert modular forms defined over a totally real field $F$ of degree $d$. Let $O$ be the ring of integers of $F$, $I$ the set of the embeddings of $F$ into $\mathbb{R}$, ${\mathfrak d}$ the different of $F$, ${\rm Cl}_F$ the ideal class group of $F$, and ${\rm Cl}_F^+$ the narrow ideal class group of $F$. For a vector $q=(q_{\sigma})_{\sigma \in I} \in ({\mathbb{Z}}/2{\mathbb{Z}})^d$ and $a \in F$, we write ${\rm sgn}(a)^q=\prod_{\sigma \in I} {\rm sgn}(a^{\sigma})^{q_\sigma}$. Let ${\mathfrak H}$ be the upper half plane ${\mathfrak H}=\left\{z \in {\mathbb{C}} \mid {\rm Im}(z)>0 \right\}$, $GL_2(F)$ the group of all $2 \times 2$ invertible matrices with coefficients in $F$, and $GL_2^+(F)$ the subgroup of $GL_2(F)$ consisting of all matrices with totally positive determinant. For a function $f$ on ${\mathfrak H}^I$, a matrix $\gamma=\left(
\begin{smallmatrix}
 a & b \\
 c & d
\end{smallmatrix}
\right) \in GL_2^+(F)$, and an integer $k \geq 0$, we define a new function $f|_k \gamma$ on ${\mathfrak H}^I$ by the rule
\begin{align*}
 (f|_k \gamma)(z) &= \det(\gamma)^{\frac{k}{2}}(cz+d)^{-k}f(\gamma z)
\end{align*}
where
$$\det(\gamma)=\prod_{\sigma \in I}\det(\gamma)^{\sigma}, \ cz+d=\prod_{\sigma \in I}(c^{\sigma}z_{\sigma}+d^{\sigma}), \ \text{and} \ \gamma z=\left(\frac{a^{\sigma}z_{\sigma}+b^{\sigma}}{c^{\sigma}z_{\sigma}+d^{\sigma}} \right)_{\sigma \in I}. $$
Let ${\mathfrak b}$ be an integral ideal of $F$, and $\psi$ a narrow ray class character modulo ${\mathfrak b}$ and signature $r \in ({\mathbb{Z}}/2{\mathbb{Z}})^d$. Then we have a well-defined finite order character $\psi_f : (O/{\mathfrak b})^\times \rightarrow {\mathbb{C}}^\times$ given by $\psi_f(a)=\psi(aO){\rm sgn}(a)^r$. For each $\lambda \in {\rm Cl}_F^+$, we choose a representative fractional ideal ${\mathfrak t}_\lambda$ of $\lambda$ and put
\begin{align*}
 \Gamma_\lambda({\mathfrak b}) &= \left\{\left(
\begin{array}{cc}
 a & b \\
 c & d
\end{array}
\right) \in GL_2^+(F) \bigg\vert a, d \in O, b \in {\mathfrak d}^{-1}{\mathfrak t}_\lambda^{-1}, c \in {\mathfrak b}{\mathfrak d}{\mathfrak t}_\lambda \ \text{and} \ ad-bc \in O^\times \right\}.
\end{align*}
\begin{definition}
\label{defn:001}
Let $k, {\mathfrak b}$ and $\psi$ be as before, and $h=\# {\rm Cl}_F^+$. An $h$-tuple $f=(f_\lambda)_{\lambda \in {\rm Cl}_F^+}$ of holomorphic functions $f_\lambda: {\mathfrak H}^I \rightarrow \mathbb{C}$ is a Hilbert modular form of (parallel) weight $k$, level ${\mathfrak b}$ and character $\psi$ if for each $\lambda \in {\rm Cl}_F^+$, 
\begin{align*}
 f_\lambda |_k \gamma &= \psi_f(d) f_\lambda \ \text{for all} \ \gamma=\left(
\begin{array}{cc}
 * & * \\
 * & d
\end{array}
\right) \in \Gamma_\lambda({\mathfrak b}). 
\end{align*}
When $F={\mathbb{Q}}$, $f$ has to be holomorphic around each cusp; that is, for any $\gamma \in SL_2({\mathbb{Z}})$, 
\begin{align*}
 (f|_k \gamma)(z) &= \sum_{n=0}^\infty a\left(\frac{n}{N}, f \right)q^{\frac{n}{N}} \ \ (q=\exp(2\pi i z))
\end{align*}
where $N$ is the positive integer determined by $N{\mathbb{Z}}={\mathfrak b}$. 
\end{definition}
There is an analogue of Eisenstein series defined over ${\mathbb{Q}}$, in the following sense: 
\begin{proposition}[Shimura, \cite{S}] 
\label{prop:002}
For narrow ray class characters $\eta$ and $\psi$ of conductors ${\mathfrak a}, {\mathfrak b}$ and signatures $q, r \in ({\mathbb{Z}}/2{\mathbb{Z}})^d$ respectively, and an integer $k \geq 1$ with
\begin{align*}
 q+r & \equiv (k, k, \ldots, k) \bmod (2{\mathbb{Z}})^d,
\end{align*}
there exists a Hilbert modular form $E_k(\eta, \psi)$ of weight $k$, level ${\mathfrak a}{\mathfrak b}$ and character $\eta\psi$ such that its normalized coefficients satisfy
\begin{align}
 c({\mathfrak n}, E_k(\eta, \psi)) &= \sum_{{\mathfrak n}_1 \mid {\mathfrak n}} \eta\left(\frac{{\mathfrak n}}{{\mathfrak n}_1} \right) \psi({\mathfrak n}_1) {\rm N}({\mathfrak n}_1)^{k-1} \ \text{for each non-zero integral ideal} \ {\mathfrak n}; \ \text{and} \label{eqn:001} \\
 c_\lambda(0, E_k(\eta, \psi)) &= \left\{
\begin{array}{l}
 \delta_{\eta, {\rm id}}2^{-d}L(\psi, 1-k) \ \mbox{if\ } k \geq 2, \\
 2^{-d}(\delta_{\eta, {\rm id}}L(\psi, 0)+\delta_{\psi, {\rm id}}L(\eta, 0)) \ \mbox{if\ } k=1 
\end{array}
\right. \ \text{for each} \ \lambda \in {\rm Cl}_F^+. \label{eqn:002}
\end{align}
The sum in $(\ref{eqn:001})$ runs over all integral ideals ${\mathfrak n}_1$ dividing ${\mathfrak n}$, and ${\rm N}={\rm N}_{F/{\mathbb{Q}}}$ is the norm of $F/{\mathbb{Q}}$. In $(\ref{eqn:002})$, $\delta_{\eta, {\rm id}}=1$ if $\eta={\rm id}$ $($i.e., ${\mathfrak a}=O)$ and $0$ otherwise. $L(\eta, s)$ denotes the Hecke $L$-function attached to $\eta$ $($we use the same notation for other characters$)$. We call $E_k(\eta, \psi)$ the Eisenstein series of weight $k$ associated with characters $(\eta, \psi)$. 
\end{proposition}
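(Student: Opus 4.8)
The plan is to follow Shimura's construction: realize each component $f_\lambda$ of $E_k(\eta, \psi)$ as a Hecke-regularized lattice sum attached to $\Gamma_\lambda({\mathfrak b})$, establish its modularity and holomorphy, and then read off both $(\ref{eqn:001})$ and $(\ref{eqn:002})$ from an explicit Fourier expansion. For each $\lambda \in {\rm Cl}_F^+$ and a complex parameter $s$, I would first introduce the real-analytic series
$$ f_\lambda(z, s) = \sum_{(c,d)} \frac{\chi(c,d)}{{\rm N}(cz+d)^k\, |{\rm N}(cz+d)|^{2s}}, $$
where $(c,d)$ runs over a set of representatives attached to $\Gamma_\lambda({\mathfrak b})$ modulo upper-triangular units, $\chi(c,d)$ is the product of values of $\eta$ and $\psi$ dictated by the desired coefficient formula, and ${\rm N}(cz+d) = \prod_{\sigma \in I}(c^\sigma z_\sigma + d^\sigma)$. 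This converges absolutely for ${\rm Re}(k+2s) > 2$, so for $k \geq 3$ one sets $s=0$ directly. The transformation law $f_\lambda|_k \gamma = (\eta\psi)_f(d) f_\lambda$ under $\Gamma_\lambda({\mathfrak b})$ then follows by reindexing the sum, since the factor ${\rm N}(cz+d)$ behaves multiplicatively along the group action and $\chi$ absorbs the resulting character values.

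To treat the remaining cases $k = 1, 2$ I would invoke Hecke's summation trick in the several-variable setting: establish meromorphic continuation of $f_\lambda(z,s)$ to a neighbourhood of $s=0$, prove holomorphy there, and verify that $f_\lambda(z,0)$ is a \emph{holomorphic} function of $z$, the nonholomorphic contributions cancelling at $s=0$ precisely because of the parity hypothesis $q + r \equiv (k, \ldots, k) \bmod (2{\mathbb{Z}})^d$. This is the technical heart of the argument and the step I expect to be the main obstacle, since one must control the analytic continuation of a $d$-fold product of completed Hurwitz-type zeta functions and confirm that the archimedean Whittaker factors degenerate at $s=0$ to leave a genuine $q$-expansion; the signature condition is exactly what guarantees non-vanishing of the relevant gamma factors and holomorphy of the limit.

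Finally, I would compute the Fourier expansion by applying the Hilbert-modular analogue of the Lipschitz--Hurwitz summation formula one embedding at a time, so that the Fourier coefficients appear as products and sums over $I$. The nonzero Fourier modes are indexed by totally positive elements lying in the fractional ideals attached to $\lambda$; repackaging them according to the integral ideal they generate converts the additive-character expansion of each $f_\lambda$ into the ideal-indexed normalized coefficient
$$ c({\mathfrak n}, E_k(\eta, \psi)) = \sum_{{\mathfrak n}_1 \mid {\mathfrak n}} \eta\!\left(\frac{{\mathfrak n}}{{\mathfrak n}_1}\right) \psi({\mathfrak n}_1)\, {\rm N}({\mathfrak n}_1)^{k-1}, $$
which is $(\ref{eqn:001})$. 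For the constant term I would isolate the zeroth Fourier mode: the $c=0$ part survives only when ${\mathfrak a}=O$, i.e. $\eta = {\rm id}$, producing a multiple of a value of $L(\psi, \cdot)$, whereas the zeroth mode of the $c \neq 0$ terms vanishes for $k \geq 2$ but persists for $k=1$, contributing the symmetric term $\delta_{\psi, {\rm id}} L(\eta, 0)$. Summing the resulting Dirichlet-type series in their region of convergence and applying the functional equation of the Hecke $L$-function to reach the special value then turns the raw constant into $2^{-d}\delta_{\eta,{\rm id}}L(\psi, 1-k)$ for $k \geq 2$, and into the displayed $k=1$ expression otherwise, establishing $(\ref{eqn:002})$.
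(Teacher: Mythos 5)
Your proposal is correct and follows essentially the same route as the paper: both construct $E_k(\eta,\psi)_\lambda$ as a Hecke-regularized lattice sum twisted by $\eta$ and $\psi$ (the paper writes it as a sum over ideal classes ${\mathfrak r}\in{\rm Cl}_F$ and pairs $(a,b)$ modulo units, with a Gauss-sum prefactor), continue meromorphically in $s$ via Hecke's trick, and extract $(\ref{eqn:001})$ and $(\ref{eqn:002})$ from the Fourier expansions of the partial sums $G_k(z,s;a_1,a_2,{\mathfrak m},{\mathfrak r})$, deferring the analytic details to Shimura's Proposition 3.4. The paper's own argument is likewise only an outline, so your level of detail matches it.
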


Our main result is the computation of the constant terms of $E_k(\eta, \psi)$ at \emph{all} equivalence classes of cusps. We write $E_k(\eta, \psi)=(E_k(\eta, \psi)_\lambda)_{\lambda \in {\rm Cl}_F^+}$ in the sense of Definition \ref{defn:001}. The following computation is the first step of our main result: 
\begin{proposition}
\label{prop:003}
Let $\Gamma_\lambda^1(O)=\Gamma_\lambda(O) \cap SL_2(F)$. For a matrix
\begin{align*}
 A_\lambda &= \left(
\begin{array}{cc}
 \alpha_\lambda & \beta_\lambda \\
 \gamma_\lambda & \delta_\lambda
\end{array}
\right) \in \Gamma_\lambda^1(O)
\end{align*}
$($in particular $\gamma_\lambda \in {\mathfrak d}{\mathfrak t}_\lambda)$, the constant term of ${\rm N}({\mathfrak t}_\lambda)^{-\frac{k}{2}}E_k(\eta, \psi)_\lambda|_k A_\lambda$ is equal to
\begin{align*}
 & \frac{1}{2^d}\frac{\tau(\eta \psi^{-1})}{\tau(\psi^{-1})} \left(\frac{{\rm N}({\mathfrak b})}{{\rm N}({\mathfrak c})} \right)^k {\rm sgn}(-\gamma_\lambda)^q \eta(\gamma_\lambda ({\mathfrak b}{\mathfrak d}{\mathfrak t}_\lambda)^{-1}) {\rm sgn}(\alpha_\lambda)^r \psi^{-1}(\alpha_\lambda) \\
 & \ \ \times L(\eta^{-1} \psi, 1-k) \prod_{{\mathfrak q} \mid {\mathfrak m}, \ {\mathfrak q} \nmid {\mathfrak c}} (1-\eta \psi^{-1}({\mathfrak q}){\rm N}({\mathfrak q})^{-k})
\end{align*}
if $\gamma_\lambda \in {\mathfrak b}{\mathfrak d}{\mathfrak t}_\lambda$, and $0$ otherwise.
Here $\tau(\psi^{-1})$ is the Gauss sum of $\psi^{-1}$, ${\mathfrak c}={\rm cond}(\eta^{-1}\psi)$, ${\mathfrak m}={\mathfrak a}{\mathfrak b}$, and the last product runs over all prime ideals ${\mathfrak q}$ with ${\mathfrak q} \mid {\mathfrak m}$ and ${\mathfrak q} \nmid {\mathfrak c}$. 
\end{proposition}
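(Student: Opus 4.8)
The plan is to compute directly from the series defining $E_k(\eta,\psi)_\lambda$, since $A_\lambda$ does not stabilize the cusp $\infty$ and Definition \ref{defn:001} provides no invariance under it. Following Shimura's construction \cite{S}, I would first record $E_k(\eta,\psi)_\lambda$ as a Hecke-type sum
\[
 E_k(\eta,\psi)_\lambda(z) = \sum_{(c,d)} \frac{\chi(c,d)}{{\rm N}(cz+d)^k},
\]
where $(c,d)$ ranges over pairs with $c \in {\mathfrak b}{\mathfrak d}{\mathfrak t}_\lambda$ and $d \in {\mathfrak d}^{-1}{\mathfrak t}_\lambda^{-1}$ modulo the action of $O^\times$, and $\chi(c,d)$ is the locally constant symbol assembled from $\eta$ on the ideal attached to $c$, a residue (additive) character in $d$ governed by $\psi$, and the sign characters of signatures $q$ and $r$; for $k=1$ one reads this off as the value at $s=0$ of the analytically continued series $\sum \chi(c,d){\rm N}(cz+d)^{-k}|{\rm N}(cz+d)|^{-2s}$. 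The factor ${\rm N}({\mathfrak t}_\lambda)^{-k/2}$ in the statement is carried along unchanged and only fixes the final normalization.

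Second, I would apply $|_k A_\lambda$ and reindex. Writing $A_\lambda=\left(\begin{smallmatrix}\alpha_\lambda&\beta_\lambda\\\gamma_\lambda&\delta_\lambda\end{smallmatrix}\right)$ with $\det A_\lambda=1$, the substitution $(c,d)\mapsto(c,d)A_\lambda=(c',d')$ turns the slash into
\[
 (E_k(\eta,\psi)_\lambda|_k A_\lambda)(z) = \sum_{(c',d')} \chi\big((c',d')A_\lambda^{-1}\big)\,{\rm N}(c'z+d')^{-k},
\]
so the effect of the cusp is absorbed entirely into the transformed symbol $\chi((c',d')A_\lambda^{-1})$. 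The constant term is then isolated by splitting off the degenerate terms $c'=0$: for $c'\neq 0$, the Hilbert-modular Lipschitz (Poisson summation) formula expands $\sum_{d'}{\rm N}(c'z+d')^{-k}$ as a series in $e_F(\xi z)$ supported on totally positive $\xi$ only, hence contributes nothing to the zeroth Fourier coefficient. Thus the constant term equals the $c'=0$ part, namely the sum over $(c,d)=d'(-\gamma_\lambda,\alpha_\lambda)$ of $\chi(-d'\gamma_\lambda, d'\alpha_\lambda){\rm N}(d')^{-k}$.

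Third, I would evaluate this degenerate sum. Factoring the symbol shows that the dependence on $\gamma_\lambda$ and $\alpha_\lambda$ pulls out as the constants ${\rm sgn}(-\gamma_\lambda)^q\,\eta(\gamma_\lambda({\mathfrak b}{\mathfrak d}{\mathfrak t}_\lambda)^{-1})$ and ${\rm sgn}(\alpha_\lambda)^r\,\psi^{-1}(\alpha_\lambda)$; here one checks, using $\alpha_\lambda\delta_\lambda-\beta_\lambda\gamma_\lambda=1$ and $\gamma_\lambda\in{\mathfrak b}{\mathfrak d}{\mathfrak t}_\lambda$, that $\alpha_\lambda$ is prime to ${\mathfrak b}$, so that $\psi^{-1}(\alpha_\lambda)$ is defined. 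The residual sum over $d'$ is a one-dimensional character sum that I would convert, through the Hecke $L$-function machinery, into $L(\eta^{-1}\psi,1-k)$. Two corrections arise in this conversion: resolving the residue character in $d'$ into a multiplicative ideal character produces the Gauss sum $\tau(\eta\psi^{-1})$, which, compared with the Gauss sum $\tau(\psi^{-1})$ carried by the normalization of $E_k(\eta,\psi)$, yields the ratio $\tau(\eta\psi^{-1})/\tau(\psi^{-1})$ together with the norm factor $({\rm N}({\mathfrak b})/{\rm N}({\mathfrak c}))^k$; and since the level ${\mathfrak m}={\mathfrak a}{\mathfrak b}$ generally exceeds the conductor ${\mathfrak c}={\rm cond}(\eta^{-1}\psi)$, the imprimitivity of the summation reinstates exactly the finite Euler product $\prod_{{\mathfrak q}\mid{\mathfrak m},\,{\mathfrak q}\nmid{\mathfrak c}}(1-\eta\psi^{-1}({\mathfrak q}){\rm N}({\mathfrak q})^{-k})$. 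Finally, the same support condition in $\chi$ forces $c=-d'\gamma_\lambda\in{\mathfrak b}{\mathfrak d}{\mathfrak t}_\lambda$, which is unattainable when $\gamma_\lambda\notin{\mathfrak b}{\mathfrak d}{\mathfrak t}_\lambda$; in that case the degenerate sum is empty and the constant term vanishes, giving the stated dichotomy.

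I expect the main obstacle to be the bookkeeping in this last conversion: keeping simultaneous track of the different ${\mathfrak d}$, the class representative ${\mathfrak t}_\lambda$, the two conductors, and the narrow-class sign characters while passing from the additive character sum to the primitive Hecke $L$-value, so that the Gauss sum ratio, the factor $({\rm N}({\mathfrak b})/{\rm N}({\mathfrak c}))^k$, and the imprimitive Euler factors all emerge with exactly the right exponents and no spurious powers of ${\rm N}({\mathfrak t}_\lambda)$ or unit contributions. The totally real setting compounds this, since every sum is taken modulo $O^\times$ and the signatures $q,r$ must be matched against $\gamma_\lambda$, $\alpha_\lambda$ and $d'$ consistently.
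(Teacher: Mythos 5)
Your overall strategy is the same as the paper's: write $E_k(\eta,\psi)_\lambda$ as Shimura's lattice sum with the auxiliary variable $s$, apply $|_kA_\lambda$, observe that the constant term is carried by the degenerate pairs proportional to $(-\gamma_\lambda,\alpha_\lambda)$, and evaluate the resulting one--variable sum as a Hecke $L$--value, the Gauss sum ratio $\tau(\eta\psi^{-1})/\tau(\psi^{-1})$, the factor $({\rm N}(\mathfrak{b})/{\rm N}(\mathfrak{c}))^k$ and the imprimitive Euler factors all coming out of the functional equation relating $L(\eta\psi^{-1},k)$ to $L(\eta^{-1}\psi,1-k)$. That part of the plan is sound and matches the paper's Case 2.

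There is, however, a genuine gap in your treatment of the vanishing case. You claim that when $\gamma_\lambda\in\mathfrak{d}\mathfrak{t}_\lambda$ but $\gamma_\lambda\notin\mathfrak{b}\mathfrak{d}\mathfrak{t}_\lambda$ the degenerate index set is empty because the support condition $c=-d'\gamma_\lambda\in\mathfrak{b}\mathfrak{d}\mathfrak{t}_\lambda$ is ``unattainable.'' It is attainable: writing $\gamma_\lambda O=\mathfrak{n}\mathfrak{d}\mathfrak{t}_\lambda$ with $\mathfrak{n}$ integral, the set of admissible $d'$ is the intersection of the two nonzero fractional ideals $\mathfrak{b}\mathfrak{n}^{-1}$ and $\alpha_\lambda^{-1}\mathfrak{d}^{-1}\mathfrak{t}_\lambda^{-1}$ (in your normalization; the analogous statement holds for the paper's pairs $(a,b)\in\mathfrak{r}\times(\mathfrak{b}\mathfrak{d}\mathfrak{t}_\lambda)^{-1}\mathfrak{r}$), which is again a nonzero fractional ideal, hence infinite --- the factor $d'$ simply absorbs the divisibility that $\gamma_\lambda$ lacks. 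The correct mechanism, and the one the paper uses, is that every such degenerate term has its second coordinate attached to an ideal that is forced to be divisible by a positive power of some prime $\mathfrak{p}\mid\mathfrak{b}$ (in the paper's notation, $b\mathfrak{b}\mathfrak{d}\mathfrak{t}_\lambda\mathfrak{r}^{-1}\subset\mathfrak{p}^{f-e}$ with $f-e>0$), so that $\psi^{-1}$, extended by zero off ideals prime to $\mathfrak{b}$, kills the term. The dichotomy in the statement is a character vanishing, not an emptiness of the index set, and your write-up would need to be repaired accordingly. A secondary caveat: your assertion that the nondegenerate terms contribute nothing to the constant term is immediate only for $k\geq3$; for $k=1$ the zero frequency of the Poisson summation can survive the continuation to $s=0$ (this is precisely the origin of the extra term $\delta_{\psi,{\rm id}}L(\eta,0)$ in the expansion at $\infty$), so this step needs justification at low weight --- though the paper's own proof is equally terse on that point.
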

This proposition implies Ohta's computation of the constant terms of Eisenstein series over ${\mathbb{Q}}$ in \cite{O}, as well as Dasgupta, Darmon and Pollack's description of the constant terms over a totally real field in \cite{DDP} (see Remark \ref{rmk:305} of Section 3.1 for details). 

According to Proposition \ref{prop:003}, the second thing we should investigate is the equivalence classes of cusps of $\Gamma_\lambda^1(O)$. Let $B^1(F)$ be the group of all upper triangular matrices of $SL_2(F)$. It is well known that there is a bijection
$$SL_2(F)/B^1(F) \rightarrow {\mathbb{P}}^1(F); \ \gamma \mapsto \gamma(\infty).$$
Thus we need to describe the set $\Gamma_\lambda^1(O) \backslash SL_2(F)/B^1(F)$ explicitly. We will show in Proposition \ref{prop:308} that the map
\begin{align}
 il_{{\mathfrak t}_\lambda^{-1}} : \Gamma_\lambda^1(O) \backslash SL_2(F)/B^1(F) \rightarrow {\rm Cl}_F & ; \ \left(
\begin{array}{cc}
 a & b \\
 c & d
\end{array}
\right) \mapsto c({\mathfrak d}{\mathfrak t}_\lambda)^{-1}+aO \label{eqn:003}
\end{align}
is a bijection. We will compute in Section 3.3 a suitable representative under $il_{{\mathfrak t}_\lambda^{-1}}$ of a given ideal class in ${\rm Cl}_F$ represented by a non-zero integral ideal ${\mathfrak r}_0$ explicitly. We can find a matrix
\begin{align*}
 A_\lambda &= \left(
\begin{array}{cc}
 \alpha_\lambda & \beta_\lambda \\
 \gamma_\lambda & \delta_\lambda
\end{array}
\right) \in SL_2(F)
\end{align*}
with $il_{{\mathfrak t}_\lambda^{-1}}(A_\lambda)={\mathfrak r}_0$ so that 
$$\alpha_\lambda O={\mathfrak n}_2{\mathfrak r}_0, \ \beta_\lambda \in ({\mathfrak d}{\mathfrak t}_\lambda{\mathfrak r}_0)^{-1}, \ \gamma_\lambda O={\mathfrak n}_1{\mathfrak d}{\mathfrak t}_\lambda{\mathfrak r}_0 \ \text{and} \ \delta_\lambda \in {\mathfrak r}_0^{-1}.$$
Here ${\mathfrak n}_i \ (i=1, 2)$ are integral ideals satisfying ${\mathfrak n}_1+{\mathfrak n}_2=O$ and ${\mathfrak n}_1+{\mathfrak b}=O$ (see Proposition \ref{prop:309} for details). 
\begin{theorem}
\label{thm:004}
Let ${\mathfrak r}_0, A_\lambda$, and ${\mathfrak n}_i \ (i=1, 2)$ be as above. If ${\mathfrak r}_0$ is not equivalent to the class of $\infty$ under the bijection $(\ref{eqn:003})$, the constant term of ${\rm N}({\mathfrak t}_\lambda)^{-\frac{k}{2}} E_k(\eta, \psi)_\lambda$ at the cusp corresponding to the equivalence class of ${\mathfrak r}_0$ $($i.e., the constant term of ${\rm N}({\mathfrak t}_\lambda)^{-\frac{k}{2}} E_k(\eta, \psi)_\lambda|A_\lambda$ at $\infty)$ is equal to
\begin{align*}
 & \delta_{\psi, {\rm id}}\frac{1}{2^d}\tau(\eta)\left(\frac{{\rm N}({\mathfrak r}_0)}{{\rm N}({\mathfrak a})} \right)^k {\rm sgn}(-\gamma_\lambda)^q \eta({\mathfrak n}_1) L(\eta^{-1}, 1-k).
\end{align*}
\end{theorem}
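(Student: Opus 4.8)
The plan is to compute the zeroth Fourier coefficient of $\mathrm{N}(\mathfrak{t}_\lambda)^{-\frac{k}{2}}E_k(\eta,\psi)_\lambda|_k A_\lambda$ at $\infty$ directly, paralleling the proof of Proposition \ref{prop:003} but now for the general matrix $A_\lambda\in SL_2(F)$ supplied by Proposition \ref{prop:309}. Since $A_\lambda$ represents the cusp class $\mathfrak{r}_0$, which by hypothesis is inequivalent to the class of $\infty$, we have $A_\lambda\notin\Gamma_\lambda^1(O)$, so Proposition \ref{prop:003} does not apply verbatim; the point is to carry the same lattice computation through with the new ideal-theoretic normalization $\alpha_\lambda O=\mathfrak{n}_2\mathfrak{r}_0$, $\beta_\lambda\in(\mathfrak{d}\mathfrak{t}_\lambda\mathfrak{r}_0)^{-1}$, $\gamma_\lambda O=\mathfrak{n}_1\mathfrak{d}\mathfrak{t}_\lambda\mathfrak{r}_0$, $\delta_\lambda\in\mathfrak{r}_0^{-1}$, together with the coprimality relations $\mathfrak{n}_1+\mathfrak{n}_2=O$ and $\mathfrak{n}_1+\mathfrak{b}=O$.

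First I would write $E_k(\eta,\psi)_\lambda$ as the Hecke-type lattice series in $z\in\mathfrak{H}^I$ underlying Shimura's Proposition \ref{prop:002}, namely a sum of $\mathrm{N}(cz+d)^{-k}$ weighted by values of $\eta$ and $\psi$ over pairs $(c,d)$ in the lattice attached to $\mathfrak{t}_\lambda$, taken modulo units. Applying $|_k A_\lambda$ transports $(c,d)$ linearly by the columns of $A_\lambda$, so the summation is reindexed over a new lattice whose ideal type is pinned down exactly by the data above. The constant term at $\infty$ is then the subsum over the terms whose transported lower-left entry vanishes; carrying out the resulting sum over the transported $d$-variable, reduced modulo units and modulo the conductors, collapses by the standard partial-zeta/Hurwitz evaluation to a special value of a Hecke $L$-function, multiplied by a finite Euler product over primes dividing $\mathfrak{m}=\mathfrak{a}\mathfrak{b}$ and by a Gauss sum, just as in Proposition \ref{prop:003}.

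The genuinely new feature, and the source of the factor $\delta_{\psi,\mathrm{id}}$, is an inner character sum modulo $\mathfrak{b}$. Because $\mathfrak{r}_0$ is inequivalent to $\infty$, the change of lattice forces the $\psi$-component of the summand to be paired against a nontrivial additive character on $(O/\mathfrak{b})$, so the inner sum vanishes unless $\psi$ is trivial; this is exactly the dichotomy recorded by $\delta_{\psi,\mathrm{id}}$, and it is the main point distinguishing a general cusp from the cusp $\infty$ treated in Proposition \ref{prop:003}. When $\psi=\mathrm{id}$, so that $\mathfrak{b}=O$, $\mathfrak{m}=\mathfrak{a}$ and $\mathfrak{c}=\mathrm{cond}(\eta^{-1})=\mathfrak{a}$, the finite Euler product becomes empty and the Gauss-sum ratio collapses to $\tau(\eta)$, while the $L$-value specializes to $L(\eta^{-1},1-k)$. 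The remaining terms then reorganize into the stated shape: the factor $\mathrm{N}(\mathfrak{r}_0)^k$ arises from the determinant normalization in $|_k A_\lambda$ together with the covolume of the transported lattice (the analogue of the width of the cusp), whereas $\eta(\mathfrak{n}_1)$ is the character value coming from the prime-to-$\mathfrak{a}$ part $\mathfrak{n}_1$ of $\gamma_\lambda(\mathfrak{d}\mathfrak{t}_\lambda)^{-1}$. Crucially, the $\mathfrak{r}_0$-part contributes only this norm factor and not a (possibly vanishing) value $\eta(\mathfrak{r}_0)$, since $\mathfrak{r}_0$ is absorbed into the scaling rather than the character; altogether this yields $\delta_{\psi,\mathrm{id}}\,\frac{1}{2^d}\,\tau(\eta)\bigl(\mathrm{N}(\mathfrak{r}_0)/\mathrm{N}(\mathfrak{a})\bigr)^k\mathrm{sgn}(-\gamma_\lambda)^q\,\eta(\mathfrak{n}_1)\,L(\eta^{-1},1-k)$.

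I expect the main obstacle to be precisely this inner character-sum analysis: proving the vanishing unless $\psi=\mathrm{id}$ while simultaneously isolating the correct Gauss sum $\tau(\eta)$ and the correct power $\mathrm{N}(\mathfrak{r}_0)^k$, and cleanly separating the $\eta$- from the $\psi$-local contributions. The coprimality relations $\mathfrak{n}_1+\mathfrak{b}=O$ and $\mathfrak{n}_1+\mathfrak{n}_2=O$ from Proposition \ref{prop:309} are what make this separation work and guarantee that $\eta(\mathfrak{n}_1)$ is an honest nonzero character value rather than a spurious zero; distinguishing the scaling factor $\mathrm{N}(\mathfrak{r}_0)^k$ from a character evaluation at $\mathfrak{r}_0$ is the most delicate bookkeeping, with a secondary nuisance being the uniform tracking of the totally-real sign characters $\mathrm{sgn}(-\gamma_\lambda)^q$ across all $d$ archimedean places.
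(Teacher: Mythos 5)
Your overall architecture matches the paper's: expand $E_k(\eta,\psi)_\lambda|A_\lambda$ as the lattice series over pairs $(a,b)$ with $a\in\mathfrak{r}$, $b\in(\mathfrak{b}\mathfrak{d}\mathfrak{t}_\lambda)^{-1}\mathfrak{r}$, isolate the terms with $a\alpha_\lambda+b\gamma_\lambda=0$, reindex them via $(a,b)\mapsto a\beta_\lambda+b\delta_\lambda$ onto $(\mathfrak{d}\mathfrak{t}_\lambda\mathfrak{r}_0)^{-1}\mathfrak{r}/U$, and evaluate the resulting partial zeta sum by the functional equation. However, there is a genuine gap at the step you yourself identify as the crux: the origin of $\delta_{\psi,\mathrm{id}}$. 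You propose that the vanishing for $\psi\neq\mathrm{id}$ comes from an inner character sum modulo $\mathfrak{b}$ being ``paired against a nontrivial additive character,'' i.e., an orthogonality cancellation. That is not the mechanism, and as stated it would not work: the Gauss sum $\tau(\psi)$ has already been extracted as a prefactor in the series defining $E_k(\eta,\psi)_\lambda(z,s)$, and a nontrivial multiplicative character paired against a nontrivial additive character does not in general sum to zero.

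The actual argument is termwise and purely ideal-theoretic. Under the constraint $a\alpha_\lambda+b\gamma_\lambda=0$ one has $a\alpha_\lambda\in\mathfrak{n}_2\mathfrak{r}_0\mathfrak{r}$ and $b\gamma_\lambda\in\mathfrak{b}^{-1}\mathfrak{n}_1\mathfrak{r}_0\mathfrak{r}$; since $\mathfrak{n}_1+\mathfrak{b}=O$ and $\mathfrak{n}_1+\mathfrak{n}_2=O$, the intersection of these two fractional ideals is $\mathfrak{n}_1\mathfrak{n}_2\mathfrak{r}_0\mathfrak{r}$, whence $b\in\mathfrak{n}_2(\mathfrak{d}\mathfrak{t}_\lambda)^{-1}\mathfrak{r}$ and therefore $b\mathfrak{b}\mathfrak{d}\mathfrak{t}_\lambda\mathfrak{r}^{-1}\subset\mathfrak{n}_2\mathfrak{b}$ is divisible by $\mathfrak{b}$. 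Since $\psi$ is primitive of conductor $\mathfrak{b}$, every individual surviving term carries the factor $\psi^{-1}(-b\mathfrak{b}\mathfrak{d}\mathfrak{t}_\lambda\mathfrak{r}^{-1})=0$ unless $\mathfrak{b}=O$; no cancellation between terms is involved. This is precisely where both coprimality conditions from Proposition \ref{prop:309} enter, and it also shows that the factor $\eta(\mathfrak{n}_1)=\eta(\gamma_\lambda(\mathfrak{d}\mathfrak{t}_\lambda\mathfrak{r}_0)^{-1})$ appears directly, so the ``delicate separation'' of $\mathfrak{r}_0$ from the character evaluation that you anticipate never actually arises. A smaller inaccuracy: $\mathrm{N}(\mathfrak{r}_0)^k$ does not come from the determinant normalization (one has $\det A_\lambda=1$), but from the norm $\mathrm{N}(d\mathfrak{d}\mathfrak{t}_\lambda\mathfrak{r}_0\mathfrak{r}^{-1})^{-k}$ in the reindexed partial zeta sum, combined with the functional equation for $L(\eta,s)$, which contributes $\mathrm{N}(\mathfrak{a})^{1-k}$ and yields the ratio $\left(\mathrm{N}(\mathfrak{r}_0)/\mathrm{N}(\mathfrak{a})\right)^k$ after collecting constants.
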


\textbf{Layout.} Section 1 is devoted to reviewing elliptic modular forms, Eisenstein series and the computation of Fourier expansion. In Section 2, we give basics of Hilbert modular forms and Eisenstein series. In the last section, we investigate the equivalence classes of cusps of certain congruence subgroups, and compute the constant terms of Eisenstein series at all equivalence classes of cusps. 

\textbf{Notation.} Throughout this paper we use the following notation: 
\begin{itemize}
\item $i \in {\mathbb{C}}$: a fixed square root of $-1$; 
\item ${\mathfrak H}$: the upper half plane ${\mathfrak H}=\left\{z \in {\mathbb{C}} \mid {\rm Im}(z)>0 \right\}$; 
\item $\infty=\displaystyle \lim_{t \to +\infty}it$: the point at infinity; 
\item $GL_2({\mathbb{R}})$: the group of all $2 \times 2$ invertible matrices with real coefficients; 
\item $GL_2^+({\mathbb{R}})$: the subgroup of $GL_2({\mathbb{R}})$ consisting of all matrices with positive determinant. 
\end{itemize}

\textbf{Acknowledgement.} The author would like to express her hearty thanks to her supervisor, Professor Nobuo Tsuzuki, for his helpful advice and unceasing encouragement. She also thanks Professors Masataka Chida, Ming-Lun Hsieh, Atsushi Yamagami, and Dr. Yuichi Hirano for useful information, suggestions and discussions. 


\section{Elliptic modular forms}
In Section 1, we recall elliptic modular forms and Hecke operators. We give few proofs in this section. We refer to \cite{H1} Chapter 5 and \cite{M} Chapters 4 and 7 for details. 
\subsection{Basic definitions and examples}
In this subsection, we review basic definitions and examples of elliptic modular forms. Let $SL_2({\mathbb{Z}})$ denote the group of all matrices of determinant $1$ with coefficients in ${\mathbb{Z}}$. For each positive integer $N$, we define three congruence subgroups of $SL_2({\mathbb{Z}})$ (a subgroup $\Gamma$ of $SL_2({\mathbb{Z}})$ is called a congruence subgroup if $\Gamma$ contains $\Gamma(N)$ below for some $N \in {\mathbb{Z}}_{\geq 1}$): 
\begin{align*}
 \Gamma_0(N) &= \left\{\left(
\begin{array}{cc}
 a & b \\
 c & d
\end{array}
\right) \in SL_2({\mathbb{Z}}) \bigg\vert c \equiv 0 \bmod N \right\}, \\
 \Gamma_1(N) &= \left\{\left(
\begin{array}{cc}
 a & b \\
 c & d
\end{array}
\right) \in \Gamma_0(N) \bigg\vert a \equiv d \equiv 1 \bmod N \right\}, \ \text{and} \\
 \Gamma(N) &= \left\{\left(
\begin{array}{cc}
 a & b \\
 c & d
\end{array}
\right) \in \Gamma_1(N) \bigg\vert b \equiv 0 \bmod N \right\}. 
\end{align*}
All three groups are of finite index in $SL_2({\mathbb{Z}})$, since there is an isomorphism $SL_2({\mathbb{Z}})/\Gamma(N) \cong SL_2({\mathbb{Z}}/N{\mathbb{Z}})$. 
Note that $\Gamma(N)$ is normal in $SL_2({\mathbb{Z}})$ for any $N \geq 1$, while $\Gamma_0(N)$ and $\Gamma_1(N)$ are not normal in $SL_2({\mathbb{Z}})$ when $N \geq 2$. 

We now explain the notion of elliptic modular forms. For a function $f: {\mathfrak H} \rightarrow {\mathbb{C}}$, a matrix $\gamma=\left(
\begin{smallmatrix}
 a & b \\
 c & d
\end{smallmatrix}
\right) \in GL_2^+({\mathbb{R}})$ and an integer $k \in \mathbb{Z}$, we define another function $f|_k \gamma$ on ${\mathfrak H}$ by the rule
\begin{align*}
 (f|_k \gamma)(z) &= {\rm det}(\gamma)^{\frac{k}{2}}(cz+d)^{-k}f(\gamma z) 
\end{align*}
where $\gamma z=\frac{az+b}{cz+d}$. Note that $\gamma z \in {\mathfrak H}$, since ${\rm det}(\gamma)>0$. If $f$ is holomorphic on ${\mathfrak H}$, so is $f|_k \gamma$. 

From now on let $k$ be a non-negative integer. Any holomorphic function $f: {\mathfrak H} \rightarrow {\mathbb{C}}$ with $f|_k \gamma =f$ for all $\gamma \in \Gamma(N)$ can be expanded as follows:
\begin{align}
 f(z) &= \sum_{n \in {\mathbb{Z}}} a\left(\frac{n}{N}, f \right)q^{\frac{n}{N}} \ (q={\rm exp}(2 \pi i z)) . \label{eqn:101}
\end{align}
We call (\ref{eqn:101}) the Fourier expansion of $f$. 
\begin{definition}
\label{defn:101}
Let $\Gamma$ be a subgroup of $SL_2({\mathbb{Z}})$ containing $\Gamma(N)$. The space $M_k(\Gamma)$ of (elliptic) modular forms of weight $k$ and level $\Gamma$ consists of elements $f$ such that
\begin{itemize}
\item $f$ is a holomorphic function on ${\mathfrak H}$; 
\item $f|_k \gamma =f$ for all $\gamma \in \Gamma$; 
\item $a(n/N, f|_k \gamma)=0$ for all integers $n<0$ and $\gamma \in SL_2({\mathbb{Z}})$.
\end{itemize}
Note that for each $\gamma \in SL_2({\mathbb{Z}})$, we have $(f|_k \gamma)|_k \delta=f|_k \gamma$ for all $\delta \in \Gamma(N)$ since $\Gamma(N)$ is normal in $SL_2({\mathbb{Z}})$, and thus it makes sense to consider the coefficients $a(n/N, f|_k \gamma)$. We often omit the subscript $k$ of $f|_k \gamma$ when there is no ambiguity concerning weight. 
\end{definition}
Each $\gamma=\left(
\begin{smallmatrix}
 a & b \\
 c & d
\end{smallmatrix}
\right) \in \Gamma_0(N)$ induces an automorphism
$$R_d: M_k(\Gamma_1(N)) \rightarrow M_k(\Gamma_1(N)); \ f \mapsto f|_k \gamma$$
on $M_k(\Gamma_1(N))$. Since we know that
$$\Gamma_0(N)/\Gamma_1(N) \cong ({\mathbb{Z}}/N{\mathbb{Z}})^{\times}; \ \left(
\begin{array}{cc}
 a & b \\
 c & d
\end{array}
\right) \mapsto d \bmod N,$$
$R_d$ depends only on $d \bmod N$. In other words, $({\mathbb{Z}}/N{\mathbb{Z}})^{\times}$ acts on $M_k(\Gamma_1(N))$ via $R_d$. We call $R_d$ the diamond operator attached to $d \bmod N$. For a Dirichlet character $\chi$ defined modulo $N$, we define the $\chi$-eigenspace
\begin{align*}
 M_k(\Gamma_0(N), \chi) &= \left\{f \in M_k(\Gamma_1(N)) \mid f|R_d=\chi(d)f \ \text{for all} \ d \in ({\mathbb{Z}}/N{\mathbb{Z}})^{\times} \right\}
\end{align*}
of $M_k(\Gamma_1(N))$. Then $M_k(\Gamma_1(N))$ can be decomposed as follows:
$$M_k(\Gamma_1(N))=\bigoplus_{\chi} M_k(\Gamma_0(N), \chi); \ f \mapsto \left(\frac{1}{\varphi(N)}\sum_{d \in ({\mathbb{Z}}/N{\mathbb{Z}})^{\times}}\chi^{-1}(d)f|R_d \right)_{\chi}.$$
Here $\chi$ runs over all Dirichlet characters modulo $N$ and $\varphi(N)=\# ({\mathbb{Z}}/N{\mathbb{Z}})^{\times}$ is the Euler function. We call an element of $M_k(\Gamma_0(N), \chi)$ a modular form of weight $k$, level $\Gamma_0(N)$ and character (also called Nebentypus) $\chi$. 
\begin{remark}
\label{rmk:102}
For $f \in M_k(\Gamma_0(N), \chi)$, we have
$$\chi(-1)f=f|_k \left(
\begin{array}{cc}
 -1 & 0 \\
 0 & -1
\end{array}
\right)=(-1)^k f.$$
Thus we see that $f=0$ when $\chi(-1)=-(-1)^k$. From now on, we assume $\chi(-1)=(-1)^k$ (i.e., $\chi$ and $k$ have the same parity) whenever we consider modular forms of weight $k$ and character $\chi$. When $N=1$, the above equality implies that $M_k(SL_2({\mathbb{Z}}))=0$ if $k$ is odd.
\end{remark}

\begin{example}
\label{exam:103}
Eisenstein series are one of the most basic and important examples of modular forms, and are also the main object of this paper. For simplicity we assume $k \geq 2$. Let $\eta$ (resp. $\psi$) be a Dirichlet character of conductor $u$ (resp. $v$), and $(\eta \psi)(-1)=(-1)^k$. We put $N=uv$. Consider first the case when $k \geq 3$. We let
\begin{align*}
 E_k ' (\eta, \psi)(z) &= \sum_{a_1=1}^u \sum_{a_2=1}^N \eta(a_1)\psi^{-1}(a_2)G_k(z, a_1v, a_2, N)  
\end{align*}
where for integers $a_i \ (i=1, 2)$, we define
\begin{align*}
 G_k(z, a_1, a_2, N) &= \sum_{\substack{
 (a, b) \in {\mathbb{Z}}^2, \ (a, b) \neq (0, 0), \\
 a \equiv a_1 \bmod N, \ b \equiv a_2 \bmod N
}} \frac{1}{(az+b)^k} 
\end{align*}
(which is absolutely convergent on ${\mathfrak H}$). We note that $E_k '(\eta, \psi)$ here is what is written as $E_k '(\theta, \psi)$ in \cite{O}. Our $\psi$ corresponds to $\theta$ there and $\eta$ here corresponds to $\psi$ there. We use this notation in order to be consistent with notation of Hilbert Eisenstein series in Sections 2 and 3. For each $\gamma=\left(
\begin{smallmatrix}
 a & b \\
 c & d
\end{smallmatrix}
\right) \in SL_2({\mathbb{Z}})$, $G_k(z, a_1, a_2, N)$ satisfies the transformation law
\begin{align*}
 G_k(z, a_1, a_2, N)|_k \gamma &= G_k(z, a_1a+a_2c, a_1b+a_2d, N) 
\end{align*}
and hence we have $E_k '(\eta, \psi)|_k \gamma=(\eta \psi)(d)E_k '(\eta, \psi)$ for any $\gamma=\left(
\begin{smallmatrix}
 a & b \\
 c & d
\end{smallmatrix}
\right) \in \Gamma_0(N)$. When $k \geq 3$, $G_k(z, a_1, a_2, N)$ is expanded as
\begin{align*}
 G_k(z, a_1, a_2, N) &= \delta(a_1, N)\sum_{\substack{
 b \equiv a_2 \bmod N, \\
 b \neq 0
}} b^{-k} \\
 & \ \ +\frac{(-2 \pi i)^k}{N^k (k-1)!} \sum_{\substack{
 a \equiv a_1 \bmod N, \\
 a>0
}} \sum_{m=1}^\infty m^{k-1} {\mathbf e}\left(\frac{ma_2}{N} \right){\mathbf e}\left(\frac{maz}{N} \right) \\
 & \ \ \ +\frac{(-2 \pi i)^k}{N^k (k-1)!}(-1)^k \sum_{\substack{
 a \equiv a_1 \bmod N, \\
 a<0
}} \sum_{m=1}^\infty m^{k-1} {\mathbf e}\left(-\frac{ma_2}{N} \right){\mathbf e}\left(-\frac{maz}{N} \right).
\end{align*}
Here ${\mathbf e}(x)={\rm exp}(2\pi ix)$, and $\delta(a_1, N)=1$ if $a_1 \in N{\mathbb{Z}}$ and $0$ otherwise. Consequently the Fourier expansion of $E_k '(\eta, \psi)$ is
\begin{align*}
 E_k '(\eta, \psi)(z) &= \frac{2(2\pi i)^k \tau(\psi^{-1})}{v^k(k-1)!} \\
 & \ \ \times \left\{\delta_{\eta, {\rm id}}\psi(-1)2^{-1}L(\psi, 1-k)+(-1)^k\sum_{n=1}^\infty \left(\sum_{0<t \mid n}\eta\left(\frac{n}{t} \right)\psi(t)t^{k-1} \right)q^n \right\}. 
\end{align*}
Here $\delta_{\eta, {\rm id}}=1$ if $\eta$ is trivial (i.e., $u=1$) and $0$ otherwise. $\tau(\psi^{-1})$ is the usual Gauss sum
\begin{align*}
 \tau(\psi^{-1}) &= \sum_{m=1}^v \psi^{-1}(m){\mathbf e}\left(\frac{m}{v} \right)
\end{align*}
of $\psi^{-1}$ and $L(\psi, 1-k)$ denotes the Dirichlet $L$-function attached to $\psi$. Thus we have $E_k '(\eta, \psi) \in M_k(\Gamma_0(N), \eta \psi)$. We normalize $E_k '(\eta, \psi)$ and define
\begin{align*}
 E_k(\eta, \psi)(z) &= \frac{v^{k-1}\tau(\psi)(k-1)!}{2(2\pi i)^k}E_k '(\eta, \psi)(z) \\ 
 &= \delta_{\eta, {\rm id}}2^{-1}L(\psi, 1-k)+\sum_{n=1}^\infty \left(\sum_{0<t \mid n}\eta\left(\frac{n}{t} \right)\psi(t)t^{k-1} \right)q^n.
\end{align*}
We call $E_k(\eta, \psi)$ the Eisenstein series of weight $k$ associated with characters $(\eta, \psi)$. Note that when $\eta=\psi={\rm id}$ (i.e., $u=v=N=1$) we have $E_k(\eta, \psi)=0$ for any odd $k \geq 3$. 

When $k=2$, we need a slight modification. For integers $a_i \ (i=1, 2)$, the series
\begin{align*}
 G_2(z, s, a_1, a_2, N) &= \sum_{\substack{
 (a, b) \in {\mathbb{Z}}^2, \ (a, b) \neq (0, 0), \\
 a \equiv a_1 \bmod N, \ b \equiv a_2 \bmod N
}} \frac{1}{(az+b)^2|az+b|^{2s}} 
\end{align*}
(which is absolutely convergent on ${\rm Re}(s)>0$) has a meromorphic continuation in $s$ to the whole complex plane, and is holomorphic at $s=0$. We put $G_2(z, a_1, a_2, N)=G_2(z, 0, a_1, a_2, N)$. The Fourier expansion of $G_2(z, a_1, a_2, N)$ is
\begin{align*}
 G_2(z, a_1, a_2, N) &= \delta(a_1, N)\sum_{\substack{
 b \equiv a_2 \bmod N, \\
 b \neq 0
}} b^{-2}-\frac{\pi}{N^2{\rm Im}(z)} \\
 & \ -\frac{4\pi^2}{N^2}\sum_{a \equiv a_1 \bmod N} \sum_{\substack{
 m \in {\mathbb{Z}}, \\
 ma>0
}} |m| {\mathbf e}\left(\frac{ma_2}{N} \right){\mathbf e}\left(\frac{maz}{N} \right).
\end{align*}
In particular $G_2(z, a_1, a_2, N)$ is not holomorphic at $\infty$. If at least one of $\eta$ or $\psi$ is non-trivial, the sum
\begin{align*}
 E_2 '(\eta, \psi)(z) &= \sum_{a_1=1}^u \sum_{a_2=1}^N \eta(a_1)\psi^{-1}(a_2) G_2(z, a_1v, a_2, N)
\end{align*}
is holomorphic at $\infty$ by the orthogonal relation of a Dirichlet character and the weight $2$ normalized Eisenstein series
\begin{align*}
 E_2(\eta, \psi)(z) &= \frac{v \tau(\psi)}{2(2\pi i)^2}E_2'(\eta, \psi)(z) \\
 &= \delta_{\eta, {\rm id}}2^{-1}L(\psi, -1)+\sum_{n=1}^\infty \left(\sum_{0<t \mid n} \eta\left(\frac{n}{t} \right)\psi(t)t \right)q^n
\end{align*}
belongs to $M_2(\Gamma_0(N), \eta \psi)$. However when $\eta=\psi={\rm id}$, $E_2'(z)=G_2(z, 1, 1, 1)$ is not holomorphic at $\infty$. As is well known, the holomorphic series
\begin{align*}
 E_2(z) &= \frac{3}{{\pi}^2}\left(E_2 '(z)+\frac{\pi}{{\rm Im}(z)} \right)=1-24 \sum_{n=1}^\infty \left(\sum_{0<t \mid n} t \right)q^n.
\end{align*}
is not a modular form (for details we refer to \cite{Og} Section 3). 
\end{example}


\section{Hilbert modular forms}
In Section 2, we first recall the definitions and basic properties of Hilbert modular forms, and the Eisenstein series constructed by Shimura in \cite{S}. Section 2 is based on \cite{DDP} Section 2, \cite{H1} Chapter 9, and \cite{S}. Throughout Sections 2 and 3, we use the following notation:
\begin{itemize}
\item $F$: a totally real number field of degree $d$; 
\item $O$: the ring of integers of $F$; 
\item $I$: the set of the embeddings of $F$ into $\mathbb{R}$;  
\item $F_{+}$: the set of the totally positive elements of $F$; 
\item $GL_2(F)$: the group of all $2 \times 2$ invertible matrices with coefficients in $F$; 
\item $GL_2^{+}(F)$: the subgroup of $GL_2(F)$ consisting of all matrices with determinant in $F_{+}$; 
\item $SL_2(F)$: the subgroup of $GL_2^+(F)$ consisting of all matrices with determinant $1$;  
\item $\mathfrak d$: the different of $F/{\mathbb{Q}}$; 
\item ${\rm N}={\rm N}_{F/{\mathbb{Q}}}$: the norm of $F/{\mathbb{Q}}$; 
\item For a vector $r=(r_{\sigma})_{\sigma \in I} \in ({\mathbb{Z}}/2{\mathbb{Z}})^d$ and $a \in F$, we write ${\rm sgn}(a)^r=\prod_{\sigma \in I} {\rm sgn}(a^{\sigma})^{r_\sigma}$. 
\end{itemize}

\subsection{Definitions and basic properties}
We begin by recalling the definition of narrow ray class characters of $F$. Let $\mathfrak b$ be a non-zero integral ideal of $F$. We put
\begin{align*}
 I({\mathfrak b}) &= \left\{\frac{{\mathfrak n}}{{\mathfrak c}} \bigg\vert {\mathfrak n} \ \text{and} \ {\mathfrak c} \ \text{are integral ideals and prime to} \ {\mathfrak b} \right\}, \\
 P_+ &= \left\{aO \mid a \in F_+ \right\}, \ \text{and} \\
 P_+({\mathfrak b}) &= P_+ \cap \left\{aO \mid a \equiv 1\bmod ^{\times} {\mathfrak b} \right\},
\end{align*}
where $a \equiv 1 \bmod ^{\times} {\mathfrak b}$ means that $aO \in I({\mathfrak b})$ and there exists $b \in F_{+}$ such that $b O \in I({\mathfrak b})$, $b \in O$, $ab \in O$, $ab \equiv b \bmod {\mathfrak b}$. We call the quotient group ${\rm Cl}({\mathfrak b})=I({\mathfrak b})/P_{+}({\mathfrak b})$ the narrow ray class group modulo $\mathfrak b$. This group is known to be finite for any non-zero $\mathfrak b$. An inclusion $\mathfrak b \subset \mathfrak b'$ of integral ideals induces a canonical homomorphism ${\rm Cl}(\mathfrak b) \rightarrow {\rm Cl}(\mathfrak b')$. 
\begin{definition}
\label{defn:201}
A narrow ray class character modulo an integral ideal ${\mathfrak b}$ is a group homomorphism $\psi: {\rm Cl}(\mathfrak b) \rightarrow {\mathbb{C}}^{\times}$. 
\end{definition}
The conductor of a narrow ray class character $\psi$ modulo $\mathfrak b$ is a unique integral ideal $\mathfrak c$ which has the following properties:
\begin{itemize}
\item $\mathfrak b \subset \mathfrak c$;
\item the canonical homomorphism $\pi: {\rm Cl}(\mathfrak b) \rightarrow {\rm Cl}(\mathfrak c)$ factors $\psi$, i.e., there exists a homomorphism $\psi_0: {\rm Cl}(\mathfrak c) \rightarrow {\mathbb{C}}^{\times}$ with $\psi=\psi_0 \circ \pi$;
\item for any integral ideal $\mathfrak c'$ with $\mathfrak c \subsetneq \mathfrak c'$, the canonical homomorphism $\pi: {\rm Cl}(\mathfrak c) \rightarrow {\rm Cl}(\mathfrak c')$ does not factor $\psi$.
\end{itemize}
We write ${\rm cond}(\psi)=\mathfrak c$. If $\mathfrak b=\mathfrak c$, then $\psi$ is said to be primitive modulo $\mathfrak b$. 

It is known that there exists a vector $r \in ({\mathbb{Z}}/2{\mathbb{Z}})^d$ such that
$$\psi(a{O})={\rm sgn}(a)^r \ \text{for all} \ a \in {O} \ \text{with}\  a \equiv 1 \bmod \mathfrak b.$$
We call $r$ the signature of $\psi$. With this assumption, we can define a character $\psi_f : (O/\mathfrak b)^{\times} \rightarrow {\mathbb{C}}^{\times}$ associated to $\psi$ by $\psi_f(a)=\psi(a{O}){\rm sgn}(a)^r$. We will always regard the right-hand side as a character on $(O/{\mathfrak b})^\times$, without any notice. 

When ${\mathfrak b}=O$, we write ${\rm Cl}_F^+$ rather than ${\rm Cl}(O)$, and we call this group the narrow ideal class group of $F$. There is a canonical surjective homomorphism from ${\rm Cl}_F^+$ to the (wide) ideal class group ${\rm Cl}_F$. In particular $h=\# {\rm Cl}_F^+$ is a multiple of the class number of $F$. 

We now describe the definition of (parallel weight) Hilbert modular forms over $F$. First we choose a representative fractional ideal ${\mathfrak t}_\lambda$ of $\lambda$ for each $\lambda \in {\rm Cl}_F^+$, and define the subgroup
$$\Gamma_\lambda({\mathfrak b})=\left\{\left(
\begin{array}{cc}
a & b \\
c & d
\end{array}
\right) \in GL_2^{+}(F) \bigg\vert a, d \in {O}, b \in {\mathfrak d}^{-1}{\mathfrak t}_\lambda^{-1}, c \in {\mathfrak b}{\mathfrak d}{\mathfrak t}_\lambda \ \text{and} \ ad-bc \in {O}^{\times} \right\}$$
of $GL_2^{+}(F)$. 
\begin{definition}
\label{defn:202}
Let $k \geq 0$ be an integer, and $\mathfrak b, \psi$ as above. The space $M_k(\mathfrak b, \psi)$ of Hilbert modular forms of weight $k$, level $\mathfrak b$ and character $\psi$ consists of elements $f$ such that
\begin{itemize}
\item $f=(f_\lambda)_{\lambda \in {\rm Cl}_F^+}$ is an $h$-tuple of holomorphic functions $f_\lambda : {\mathfrak H}^I \rightarrow \mathbb{C}$;
\item for each $\lambda \in {\rm Cl}_F^+$, $f_\lambda$ satisfies the following modularity property:
\begin{align}
 \ \ f_\lambda |_k \gamma &= \psi_f(d)f_\lambda \ \text{for all} \ \gamma=\left(
\begin{array}{cc}
a & b \\
c & d
\end{array}
\right) \in \Gamma_\lambda(\mathfrak b). \label{eqn:205}
\end{align}
Here
$$\ \ \ \det(\gamma)=\prod_{\sigma \in I}\det(\gamma)^{\sigma}, \ cz+d=\prod_{\sigma \in I}(c^{\sigma}z_{\sigma}+d^{\sigma}), \ \gamma z=\left(\frac{a^{\sigma}z_{\sigma}+b^{\sigma}}{c^{\sigma}z_{\sigma}+d^{\sigma}} \right)_{\sigma \in I}$$
and $f_\lambda|_k \gamma$ is a function on ${\mathfrak H}^I$ defined by
\begin{align*}
 \ \ (f_\lambda|_k \gamma)(z) &= \det(\gamma)^{\frac{k}{2}}(cz+d)^{-k}f_\lambda(\gamma z), 
\end{align*}
Since each $f_\lambda$ is a function on ${\mathfrak H}^I$, we regard $z$ a $d$-tuple of variables $z_\sigma$. We also note that $\gamma z \in {\mathfrak H}^I$ for any $\gamma \in GL_2^{+}(F)$. We often omit the subscript $k$ of $f_\lambda|_k \gamma$ when there is no ambiguity concerning weight. 
\item when $F={\mathbb{Q}}$, we also impose the holomorphy condition around each cusp; that is, for any $\gamma \in SL_2({\mathbb{Z}})$, we have
\begin{align*}
 \ \ (f|_k \gamma)(z) &= \sum_{n=0}^\infty a\left(\frac{n}{N}, f \right)q^{\frac{n}{N}} \ \ (q=\exp(2\pi iz))
\end{align*}
where $N$ is the positive integer determined by $N{\mathbb{Z}}={\mathfrak b}$. 
\end{itemize}
\end{definition}
\begin{remark}
\label{rmk:203}
The definition of the subgroup $\Gamma_\lambda({\mathfrak b})$ depends on the choice of a representative fractional ideal ${\mathfrak t}_\lambda$. We take two representative ideals ${\mathfrak t}_{\lambda, i} \ (i=1,2)$ of $\lambda \in {\rm Cl}_F^+$ and consider the ${\mathbb{C}}$-vector space $M_k({\mathfrak b}, \psi)_i$ consisting of modular forms satisfying the modularity property (\ref{eqn:205}) with respect to $\Gamma_{{\mathfrak t}_{\lambda, i}}({\mathfrak b})$ for each $i$. By definition we have ${\mathfrak t}_{\lambda, 2}=u {\mathfrak t}_{\lambda, 1}$ for some $u \in F_+$. Then there is an isomorphism
$$M_k({\mathfrak b}, \psi)_1 \rightarrow M_k({\mathfrak b}, \psi)_2; \ (f_\lambda)_{\lambda \in {\rm Cl}_F^+} \mapsto \left(f_\lambda|_k \left(
\begin{array}{cc}
 u & 0 \\
 0 & 1
\end{array}
\right) \right)_{\lambda \in {\rm Cl}_F^+}. $$
However we can define Fourier coefficients of $f$ independent of the choice of a representative ideal ${\mathfrak t}_\lambda$ (see Definition \ref{defn:205} and Remark \ref{rmk:206} for details).
\end{remark}

We define a Fourier expansion of a Hilbert modular form. 
\begin{proposition}
\label{prop:204}
A Hilbert modular form $f=(f_\lambda)_{\lambda \in {\rm Cl}_F^+} \in M_k(\mathfrak b, \psi)$ has a Fourier expansion $($at the cusp $\infty=(\infty, \infty, \ldots, \infty))$ of the following form$:$
\begin{align}
 f_\lambda(z) &= a_\lambda(0)+\sum_{b \in {\mathfrak t}_\lambda \cap F_{+}} a_\lambda(b)e_F(bz) \ \text{for each} \ \lambda \in {\rm Cl}_F^+. \label{eqn:302}
\end{align}
Here $a_\lambda(0), \ a_\lambda(b)$ are complex numbers and $e_F(x)=\exp(2 \pi i {\rm Tr}(x))=\exp(2 \pi i \sum_{\sigma \in I}x_\sigma)$ $($we use this notation both for $x \in F$ and for a $d$-tuple of variables $x=(x_\sigma)_{\sigma \in I})$. 
\end{proposition}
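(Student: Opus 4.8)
The plan is to exploit the modularity property (\ref{eqn:205}) for two especially simple families of matrices lying in $\Gamma_\lambda({\mathfrak b})$: the unipotent translations and the diagonal unit matrices. First I would observe that for every $b \in {\mathfrak d}^{-1}{\mathfrak t}_\lambda^{-1}$ the matrix $\left(\begin{smallmatrix} 1 & b \\ 0 & 1 \end{smallmatrix}\right)$ lies in $\Gamma_\lambda({\mathfrak b})$ (indeed $a=d=1 \in O$, $c=0 \in {\mathfrak b}{\mathfrak d}{\mathfrak t}_\lambda$, and $ad-bc=1 \in O^\times$), and since its lower-right entry is $1$ the character factor $\psi_f(1)$ is trivial. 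Thus (\ref{eqn:205}) reduces to $f_\lambda(z+b)=f_\lambda(z)$, so $f_\lambda$ is invariant under translation by the full lattice $L={\mathfrak d}^{-1}{\mathfrak t}_\lambda^{-1}$ embedded in ${\mathbb{R}}^d$ through $I$.

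Next I would convert this periodicity into a Fourier series. A holomorphic function on ${\mathfrak H}^I$ invariant under $L$ admits an expansion $f_\lambda(z)=\sum_{\xi} a_\lambda(\xi,y)\,e^{2\pi i {\rm Tr}(\xi x)}$, where $x,y$ denote the real and imaginary parts of $z$ and $\xi$ ranges over the trace dual $L^\vee=\{\xi \in F \mid {\rm Tr}(\xi L) \subseteq {\mathbb{Z}}\}$. Using that the codifferent ${\mathfrak d}^{-1}$ is the trace dual of $O$, one has ${\rm Tr}(\xi {\mathfrak d}^{-1}{\mathfrak t}_\lambda^{-1}) \subseteq {\mathbb{Z}} \Leftrightarrow \xi {\mathfrak d}^{-1}{\mathfrak t}_\lambda^{-1} \subseteq {\mathfrak d}^{-1} \Leftrightarrow \xi \in {\mathfrak t}_\lambda$, so $L^\vee={\mathfrak t}_\lambda$. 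Applying the Cauchy--Riemann equations $\partial f_\lambda/\partial \bar z_\sigma=0$ coordinatewise shows that each coefficient satisfies $\partial a_\lambda(\xi,y)/\partial y_\sigma=-2\pi \xi^\sigma a_\lambda(\xi,y)$, whence $a_\lambda(\xi,y)=a_\lambda(\xi)e^{-2\pi {\rm Tr}(\xi y)}$ for a constant $a_\lambda(\xi)$, and the corresponding mode is exactly $a_\lambda(\xi)e_F(\xi z)$. This yields the preliminary expansion $f_\lambda(z)=\sum_{\xi \in {\mathfrak t}_\lambda} a_\lambda(\xi)\,e_F(\xi z)$.

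It then remains to show $a_\lambda(\xi)=0$ unless $\xi=0$ or $\xi \in F_+$. For $F={\mathbb{Q}}$ this is precisely the holomorphy-at-cusps condition built into Definition \ref{defn:202}. For $d \geq 2$ I would invoke Köcher's principle, realized through the matrices $\left(\begin{smallmatrix} \epsilon & 0 \\ 0 & \epsilon^{-1} \end{smallmatrix}\right) \in \Gamma_\lambda({\mathfrak b})$ with $\epsilon \in O^\times$. A direct computation with (\ref{eqn:205}) gives $f_\lambda(\epsilon^2 z)={\rm N}(\epsilon)^{-k}\psi_f(\epsilon^{-1})f_\lambda(z)$, and comparing Fourier expansions yields $a_\lambda(\epsilon^2\xi)={\rm N}(\epsilon)^k\psi_f(\epsilon)a_\lambda(\xi)$. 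Since $|{\rm N}(\epsilon)|=1$ and $|\psi_f(\epsilon)|=1$, this forces $|a_\lambda(\epsilon^2\xi)|=|a_\lambda(\xi)|$ for every unit $\epsilon$.

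The hard part is this final step of ruling out non-totally-positive $\xi$. Since $f_\lambda$ is continuous, its Fourier coefficients are uniformly bounded on each horizontal slice: $|a_\lambda(\xi)|\,e^{-2\pi {\rm Tr}(\xi y)}=|a_\lambda(\xi,y)| \leq C(y)<\infty$ for all $\xi \in {\mathfrak t}_\lambda$ and fixed $y$ with $iy \in {\mathfrak H}^I$. Suppose $\xi \neq 0$ with $\xi^{\sigma_0}<0$ for some embedding $\sigma_0$. Because $d \geq 2$, Dirichlet's unit theorem makes the image of $O^\times$ in the trace-zero hyperplane a lattice of rank $d-1 \geq 1$, so I can select units $\epsilon_n$ whose log-vectors run off to infinity inside a cone in which the $\sigma_0$-coordinate strictly dominates; then the term $e^{{\rm Tr}(\log(\epsilon_n^2)\,\cdot)}$ makes ${\rm Tr}(\epsilon_n^2\xi y) \to -\infty$, driving $|a_\lambda(\xi)|\,e^{-2\pi {\rm Tr}(\epsilon_n^2\xi y)} \to +\infty$ and contradicting the uniform bound $C(y)$ unless $a_\lambda(\xi)=0$. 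Arranging this direction and verifying the dominant-term estimate (which is exactly where $d \geq 2$ is indispensable, as the unit group is finite when $d=1$) is the delicate point of the argument; granting it, only $\xi=0$ and totally positive $\xi \in {\mathfrak t}_\lambda$ survive, giving the asserted expansion (\ref{eqn:302}).
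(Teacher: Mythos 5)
Your proof is correct and follows essentially the same route as the paper: translation invariance under the unipotent matrices $\left(\begin{smallmatrix} 1 & b \\ 0 & 1 \end{smallmatrix}\right)$ with $b\in{\mathfrak d}^{-1}{\mathfrak t}_\lambda^{-1}$ gives a Fourier expansion over the dual lattice ${\mathfrak t}_\lambda$, and Koecher's principle eliminates the coefficients at $\xi$ that are neither zero nor totally positive. The only difference is that the paper simply cites Koecher's principle from \cite{G}, whereas you prove it via the action of the diagonal unit matrices together with Dirichlet's unit theorem; that argument is sound (the uniform bound $|a_\lambda(\xi)|e^{-2\pi{\rm Tr}(\xi y)}\leq C(y)$ coming from integrating over the compact torus, combined with $|a_\lambda(\epsilon^2\xi)|=|a_\lambda(\xi)|$, does force $a_\lambda(\xi)=0$ when some $\xi^{\sigma_0}<0$).
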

\begin{proof}
The assertion is well known when $F={\mathbb{Q}}$. When $F \neq {\mathbb{Q}}$, ideas of the proof are basically the same as that for $F={\mathbb{Q}}$. Namely, the modularity property (\ref{eqn:205}) implies that $f_\lambda(z)$ is invariant under the translation by elements of ${\mathfrak t}_\lambda^{-1}{\mathfrak d}^{-1}$, and since $f_\lambda$ is holomorphic in $z$ we conclude that $f$ is of the form
\begin{align*}
 f_\lambda(z) &= \sum_{b \in {\mathfrak t}_\lambda} a_\lambda(b)e_F(bz). 
\end{align*}
We need to show that $a_\lambda(b)=0$ for all $b \in {\mathfrak t}_\lambda$ with $b \notin F_{+}$ and $b \neq 0$. This is so-called ``Koecher's principle" (see \cite{G} Theorem 3.3 of Chapter 2, Section 3). This principle does not hold when $F={\mathbb{Q}}$. 
\end{proof}

We call the coefficients $a_\lambda(b)$ the unnormalized Fourier coefficients of $f$. We also define the normalized one as follows.
\begin{definition}
\label{defn:205}
Let $f$ be as in Proposition \ref{prop:204}. We define the normalized constant term $c_\lambda(0, f)$ of $f$ by $c_\lambda(0, f)=a_\lambda(0){\rm N}({\mathfrak t}_\lambda)^{-\frac{k}{2}}$ for each $\lambda \in {\rm Cl}_F^+$.

For each non-zero integral ideal $\mathfrak n$ of $F$, there exists a unique $\lambda \in {\rm Cl}_F^+$, and $b \in {\mathfrak t}_\lambda \cap F_{+}$ unique up to multiplication by totally positive units, such that ${\mathfrak n}=b {\mathfrak t}_\lambda^{-1}$. We define the normalized Fourier coefficient $c({\mathfrak n}, f)$ associated to $\mathfrak n$ by $c({\mathfrak n}, f)=a_\lambda(b){\rm N}({\mathfrak t}_\lambda)^{-\frac{k}{2}}$. 
\end{definition}

\begin{remark}
\label{rmk:206}
The following two facts show why $c_\lambda(0, f)$ and $c({\mathfrak n}, f)$ are called ``normalized'' coefficients. These facts can be deduced from the modularity property (\ref{eqn:205}). 
\begin{itemize}
\item[(i)] $c_\lambda(0, f)$ and $c({\mathfrak n}, f)$ are independent of the choice of a representative fractional ideal ${\mathfrak t}_\lambda$.
\item[(ii)] $c({\mathfrak n}, f)$ is independent of the choice of $b \in {\mathfrak t}_\lambda \cap F_{+}$.
\end{itemize}
\end{remark}

\subsection{Eisenstein series}
In this subsection we introduce Eisenstein series, which are one of the most basic example of Hilbert modular forms. Let $\eta$ (resp. $\psi$) be a primitive narrow ray class character of conductor $\mathfrak a$ (resp. $\mathfrak b$) and signature $q \in ({\mathbb{Z}}/2{\mathbb{Z}})^d$ (resp. $r$). Actually we can define Eisenstein series for non-primitive characters, but for simplicity, we content ourselves only with primitive case here. When we consider Eisenstein series, we always impose the assumption
\begin{align}
 q+r & \equiv (k, k, \ldots, k) \bmod (2{\mathbb{Z}})^d \label{eqn:308}
\end{align}
for a weight $k$. 
\begin{proposition}[\cite{S} Proposition 3.4]
\label{prop:207}
Under the above condition, there exists a Hilbert modular form $E_k(\eta, \psi)$ of weight $k$, level ${\mathfrak a}{\mathfrak b}$ and character $\eta \psi$ with the following normalized coefficients$:$
\begin{align}
 c({\mathfrak n}, E_k(\eta, \psi)) &= \sum_{{\mathfrak n}_1 \mid {\mathfrak n}} \eta\left(\frac{{\mathfrak n}}{{\mathfrak n}_1} \right) \psi({\mathfrak n}_1) {\rm N}({\mathfrak n}_1)^{k-1} \ \text{for each non-zero integral ideal} \ \mathfrak n; \label{eqn:309} \\
 c_\lambda(0, E_k(\eta, \psi)) &= \left\{
\begin{array}{l}
 \delta_{\eta, {\rm id}}2^{-d}L(\psi, 1-k) \ \mbox{if\ } k \geq 2, \\
 2^{-d}(\delta_{\eta, {\rm id}}L(\psi, 0)+\delta_{\psi, {\rm id}}L(\eta, 0)) \ \mbox{if\ } k=1 
\end{array}
\right. \ \text{for each} \ \lambda \in {\rm Cl}_F^+. \label{eqn:310}
\end{align}
The sum in $(\ref{eqn:309})$ runs over all integral ideals ${\mathfrak n}_1$ dividing ${\mathfrak n}$. In $(\ref{eqn:310})$, $\delta_{\eta, {\rm id}}=1$ if $\eta={\rm id}$ $($i.e., ${\mathfrak a}=O)$ and $0$ otherwise. $L(\eta, s)$ denotes the Hecke $L$-function attached to the character $\eta$ $($we use the same notation for other characters$)$. We call $E_k(\eta, \psi)$ the Eisenstein series of weight $k$ associated with characters $(\eta, \psi)$.
\end{proposition}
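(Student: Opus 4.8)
The plan is to realize $E_k(\eta,\psi)$ explicitly as a normalized linear combination of lattice-type Eisenstein series attached to each narrow class $\lambda$, in direct analogy with the construction over $\mathbb{Q}$ recalled in Example \ref{exam:103}, and then to read off both the divisor-sum coefficients (\ref{eqn:309}) and the constant term (\ref{eqn:310}) from the Fourier expansion (\ref{eqn:302}). For a fixed $\lambda$ and $z \in \mathfrak{H}^I$, I would first treat the case of parallel weight $k \geq 3$, where absolute convergence is available, and define a series of the shape
\[ G_k^{\lambda}(z) = \sum_{(c,d)} (cz+d)^{-k}, \]
the product $cz+d = \prod_{\sigma \in I}(c^\sigma z_\sigma + d^\sigma)$ being understood as in Definition \ref{defn:202}, where $(c,d)$ runs over pairs with $c \in \mathfrak{b}\mathfrak{d}\mathfrak{t}_\lambda$ and $d \in O$ lying in fixed residue classes modulo $\mathfrak{a}\mathfrak{b}$, taken up to the action of the totally positive units $O^\times \cap F_+$. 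The parallel weight together with the signature condition (\ref{eqn:308}) is exactly what makes this unit-invariance well defined. Forming the combination $\sum \eta(\cdots)\psi^{-1}(\cdots)\, G_k^\lambda(z)$ weighted by the two characters over the residue classes, and checking how the congruence conditions on $(c,d)$ permute under right multiplication by $\gamma \in \Gamma_\lambda(\mathfrak{b})$, would produce a function satisfying the modularity property (\ref{eqn:205}) with the correct character $\psi_f(d)$; this step is a formal bookkeeping computation.

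The heart of the argument is the Fourier expansion, which I would obtain by separating the contribution of $c = 0$ from that of $c \neq 0$. For fixed $c \neq 0$, the inner sum over $d \in O$ in a residue class is evaluated by the totally-real generalization of the Lipschitz summation formula: Poisson summation over the lattice $\mathfrak{t}_\lambda^{-1}\mathfrak{d}^{-1}$ converts $\sum_d (cz+d)^{-k}$ into a product of archimedean gamma factors times a sum $\sum_{\mu} {\rm N}(\mu)^{k-1} e_F(\mu z)$ over totally positive $\mu$ in a suitable fractional ideal. Collecting the exponentials $e_F(bz)$ and summing the characters over all admissible $c$ then reorganizes the coefficient of $e_F(bz)$ into the Hecke divisor sum $\sum_{\mathfrak{n}_1 \mid \mathfrak{n}}\eta(\mathfrak{n}/\mathfrak{n}_1)\psi(\mathfrak{n}_1){\rm N}(\mathfrak{n}_1)^{k-1}$ with $\mathfrak{n} = b\mathfrak{t}_\lambda^{-1}$, which is (\ref{eqn:309}). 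The $c=0$ term contributes only to the constant coefficient: it is a sum $\sum_d \psi^{-1}(d){\rm N}(d)^{-k}$ that survives only when $\eta = {\rm id}$ — otherwise the $\eta$-sum over the $c$-congruence classes vanishes, producing the factor $\delta_{\eta,{\rm id}}$ — and that equals, up to elementary factors, a convergent special value of the Hecke $L$-function of $\psi$ at $k$.

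To pass from that convergent $c=0$ value to $2^{-d}L(\psi, 1-k)$, I would invoke the functional equation of the Hecke $L$-function attached to $\psi$: it relates the value at argument $k$ to the near-central value $L(\psi, 1-k)$, and the Gauss sum $\tau(\psi^{-1})$ together with the gamma factors of the $d$ real places appear exactly as the factors entering the functional equation over $F$. The factor $2^{-d}$ arises from the $d$ independent copies of the $\tfrac12$ already visible in the one-variable case of Example \ref{exam:103}. I would then fix the overall normalizing constant — a product of powers of ${\rm N}(\mathfrak{b})$, the Gauss sum $\tau(\psi)$, $(k-1)!^{\,d}$ and $(2\pi i)^{-dk}$ — precisely so that the $\mathfrak{n}$-th coefficient becomes the clean divisor sum and the constant term becomes $2^{-d}L(\psi, 1-k)$, in accordance with the normalization of Definition \ref{defn:205}; independence of the choice of $\mathfrak{t}_\lambda$ follows from Remark \ref{rmk:206}.

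The remaining cases $k = 1, 2$ require Hecke's trick, and I expect this to be the main obstacle. Here the defining series is not absolutely convergent, so I would insert a factor $\prod_{\sigma \in I}|c^\sigma z_\sigma + d^\sigma|^{2s}$, establish meromorphic continuation in $s$ to a neighborhood of $s = 0$, and prove holomorphy there. The delicate point is that the Fourier expansion at $s = 0$ a priori contains non-holomorphic terms (the analogues of the $\pi/{\rm Im}(z)$ term in Example \ref{exam:103}); one must show these cancel after summing against the characters, using orthogonality whenever at least one of $\eta, \psi$ is nontrivial. For $k = 1$ the constant term acquires the symmetric shape $2^{-d}(\delta_{\eta,{\rm id}}L(\psi,0) + \delta_{\psi,{\rm id}}L(\eta,0))$ because both the $c=0$ boundary contribution and its functional-equation counterpart survive and are interchanged. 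Verifying this cancellation and the precise split of $L$-values at $s=0$ uniformly across all $d$ real places is where the totally-real setting demands the most care beyond the classical case over $\mathbb{Q}$.
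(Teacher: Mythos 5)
Your overall strategy --- an explicit lattice Eisenstein series broken into congruence classes weighted by $\eta$ and $\psi^{-1}$, Fourier expansion via Poisson summation separating the $c=0$ and $c\neq 0$ contributions, the functional equation to convert the resulting $L$-value at $k$ into $L(\psi,1-k)$, and Hecke's trick with the $|cz+d|^{2s}$ factor for small $k$ --- is the same route the paper takes, following Shimura and Dasgupta--Darmon--Pollack. However, there is one genuine gap in your construction. You sum over a \emph{single} lattice of pairs, $c\in\mathfrak{b}\mathfrak{d}\mathfrak{t}_\lambda$ and $d\in O$, in fixed residue classes modulo $\mathfrak{a}\mathfrak{b}$. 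The ideals $c(\mathfrak{b}\mathfrak{d}\mathfrak{t}_\lambda)^{-1}$ arising this way all lie in one fixed ideal class, so when ${\rm Cl}_F$ is nontrivial your Fourier coefficients can only produce the partial divisor sum over those $\mathfrak{n}_1\mid\mathfrak{n}$ lying in that class, and your $c=0$ term produces only a partial zeta function attached to one class, not the full Hecke $L$-function $L(\psi,1-k)$. (The functional equation does not rescue this: partial zeta functions of individual classes are mixed with one another under $s\mapsto 1-s$.) The paper's definition repairs exactly this by inserting an outer sum $\sum_{\mathfrak{r}\in{\rm Cl}_F}{\rm N}(\mathfrak{r})^k$ with $a\in\mathfrak{r}$, $b\in(\mathfrak{b}\mathfrak{d}\mathfrak{t}_\lambda)^{-1}\mathfrak{r}$ and the characters evaluated on $a\mathfrak{r}^{-1}$ and $-b\,\mathfrak{b}\mathfrak{d}\mathfrak{t}_\lambda\mathfrak{r}^{-1}$; that sum over classes is what reassembles the full divisor sum $\sum_{\mathfrak{n}_1\mid\mathfrak{n}}$ in (\ref{eqn:309}) and the full $L$-function in (\ref{eqn:310}). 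Over $\mathbb{Q}$ this step is invisible because the class number is $1$, which is why the ``direct analogy'' breaks precisely here.

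Two smaller points. First, you quotient by all totally positive units, but a totally positive unit need not preserve a residue class modulo $\mathfrak{a}\mathfrak{b}$; the paper works with the finite-index subgroup $U=\{u\in O^\times : {\rm N}(u)^k=1,\ u\equiv 1\bmod\mathfrak{m}\}$ and compensates by the factor $[O^\times:U]$ in the normalizing constant. Second, making the character evaluations such as $\psi^{-1}(-b\,\mathfrak{b}\mathfrak{d}\mathfrak{t}_\lambda\mathfrak{r}^{-1})$ depend only on congruence classes requires choosing the representatives $\mathfrak{t}_\lambda$ and $\mathfrak{r}$ integral and prime to $\mathfrak{m}=\mathfrak{a}\mathfrak{b}$; this is the content of Lemmas \ref{lem:208} and \ref{lem:209}, which your sketch implicitly assumes but does not supply.
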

Before explaining the outline of the proof, we establish two lemmas on the ideal class groups of $F$, which will be frequently used afterwards.
\begin{lemma}
\label{lem:208}
Let $\mathfrak a$ and $\mathfrak m$ be non-zero integral ideals of $F$. Then there exists a totally positive element $a \in \mathfrak a$ such that $aO=\mathfrak a \mathfrak n$ with $\mathfrak n$ prime to $\mathfrak m$.
\end{lemma}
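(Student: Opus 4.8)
The plan is to reduce the statement to a condition on valuations at the primes dividing $\mathfrak{m}$, solve that by a Chinese-Remainder argument inside the $O$-module $\mathfrak{a}$, and then repair the archimedean signs by adding a large rational integer drawn from a deeper ideal. Writing $v_{\mathfrak{p}}$ for the normalized valuation attached to a prime $\mathfrak{p}$, I first observe that any $a \in \mathfrak{a}$ satisfies $v_{\mathfrak{p}}(a) \geq v_{\mathfrak{p}}(\mathfrak{a})$ for every $\mathfrak{p}$, so $\mathfrak{n} := aO\mathfrak{a}^{-1}$ is automatically an integral ideal; moreover $\mathfrak{n}$ is prime to $\mathfrak{m}$ precisely when $v_{\mathfrak{p}}(a) = v_{\mathfrak{p}}(\mathfrak{a})$ for every prime $\mathfrak{p} \mid \mathfrak{m}$. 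Thus it suffices to produce a totally positive $a \in \mathfrak{a}$ whose valuation at each prime dividing $\mathfrak{m}$ equals that of $\mathfrak{a}$.

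Next I would construct an element $a_0 \in \mathfrak{a}$ realizing these exact valuations, ignoring signs for the moment. Let $\mathfrak{p}_1, \dots, \mathfrak{p}_s$ be the distinct primes dividing $\mathfrak{m}$. Since $\mathfrak{p}_i + \mathfrak{p}_j = O$ for $i \neq j$, distributivity of ideal multiplication gives $\mathfrak{a}\mathfrak{p}_i + \mathfrak{a}\mathfrak{p}_j = \mathfrak{a}(\mathfrak{p}_i + \mathfrak{p}_j) = \mathfrak{a}$, so the submodules $\mathfrak{a}\mathfrak{p}_i$ of the $O$-module $\mathfrak{a}$ are pairwise comaximal inside $\mathfrak{a}$. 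The Chinese Remainder Theorem for modules then makes the natural map $\mathfrak{a} \to \bigoplus_{i=1}^s \mathfrak{a}/\mathfrak{a}\mathfrak{p}_i$ surjective; as each quotient $\mathfrak{a}/\mathfrak{a}\mathfrak{p}_i$ is nonzero (because $\mathfrak{p}_i \neq O$), I can choose $a_0 \in \mathfrak{a}$ lying outside every $\mathfrak{a}\mathfrak{p}_i$. For such $a_0$ one has $v_{\mathfrak{p}_i}(a_0) = v_{\mathfrak{p}_i}(\mathfrak{a})$ for all $i$, since membership in $\mathfrak{a}$ forces $v_{\mathfrak{p}_i}(a_0) \geq v_{\mathfrak{p}_i}(\mathfrak{a})$ while $a_0 \notin \mathfrak{a}\mathfrak{p}_i$ forces $v_{\mathfrak{p}_i}(a_0) < v_{\mathfrak{p}_i}(\mathfrak{a}) + 1$.

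Finally I would fix total positivity without disturbing these valuation conditions. Put $\mathfrak{m}' = \mathfrak{p}_1 \cdots \mathfrak{p}_s$. The nonzero ideal $\mathfrak{a}\mathfrak{m}'$ meets $\mathbb{Z}$ in a nonzero subgroup, hence contains a positive rational integer $m$, which is totally positive. For any integer $t \geq 1$ the element $a := a_0 + tm$ lies in $\mathfrak{a}$, and since $v_{\mathfrak{p}_i}(tm) \geq v_{\mathfrak{p}_i}(\mathfrak{a}\mathfrak{m}') = v_{\mathfrak{p}_i}(\mathfrak{a}) + 1 > v_{\mathfrak{p}_i}(a_0)$, I get $v_{\mathfrak{p}_i}(a) = v_{\mathfrak{p}_i}(a_0) = v_{\mathfrak{p}_i}(\mathfrak{a})$ for every $i$. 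Because $m^\sigma = m > 0$ for all $\sigma \in I$, taking $t$ large enough makes $a^\sigma = a_0^\sigma + tm > 0$ for every embedding $\sigma$, so $a \in F_+$. This $a$ satisfies all requirements, with $\mathfrak{n} = aO\mathfrak{a}^{-1}$ prime to $\mathfrak{m}$.

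I do not expect a serious obstacle here; the only delicate point is preventing the sign correction from altering the carefully chosen valuations at the $\mathfrak{p}_i$, which is arranged by drawing the correcting integer from the strictly smaller ideal $\mathfrak{a}\mathfrak{m}'$ rather than from $\mathfrak{a}$ itself.
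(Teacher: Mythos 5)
Your proof is correct. The non‑archimedean half is essentially the paper's argument in different clothing: choosing $a_0\in\mathfrak{a}$ outside every $\mathfrak{a}\mathfrak{p}_i$ via surjectivity of $\mathfrak{a}\to\bigoplus_i\mathfrak{a}/\mathfrak{a}\mathfrak{p}_i$ is exactly what the paper achieves by summing elements $a_i\in\mathfrak{c}_i\setminus\mathfrak{c}$ with $\mathfrak{c}_i=\mathfrak{a}\prod_{j\neq i}\mathfrak{p}_j$ (that sum is the standard proof of the CRT surjectivity you invoke). Where you genuinely diverge is in handling total positivity. The paper arranges each summand $a_i$ to be totally positive \emph{before} summing, by multiplying by a suitable $b_i\in O\setminus\mathfrak{p}_i$; this forces it to know that the lattices $a_iO\supsetneq a_i\mathfrak{p}_i$ in $\mathbb{R}^I$ contain points in the positive orthant, which is exactly the elementary lattice lemma relegated to the appendix. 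You instead fix the signs \emph{after} the fact by adding $tm$ with $m$ a positive rational integer drawn from $\mathfrak{a}\mathfrak{m}'$ and $t\gg 0$; the key observation that $v_{\mathfrak{p}_i}(tm)\geq v_{\mathfrak{p}_i}(\mathfrak{a})+1>v_{\mathfrak{p}_i}(a_0)$ keeps the valuations at the $\mathfrak{p}_i$ untouched, and $m^{\sigma}=m>0$ makes all conjugates eventually positive. This buys you a proof that bypasses the appendix entirely and is cleaner at the archimedean places, at the cost of invoking the valuation-theoretic reformulation and the module CRT explicitly. Both arguments are complete; only the trivial case $\mathfrak{m}=O$ (no primes $\mathfrak{p}_i$) is left implicit in each, and your construction degenerates correctly there anyway.
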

\begin{proof}
Let ${\mathfrak m}=\prod_{i=1}^l {\mathfrak p}^{e_i}$ be the prime ideal factorization of $\mathfrak m$. We put ${\mathfrak c}={\mathfrak a} {\mathfrak p}_1 {\mathfrak p}_2 \cdots {\mathfrak p}_l$, and ${\mathfrak c}_i={\mathfrak c}/{\mathfrak p}_i$ for each $1 \leq i \leq l$. Since ${\mathfrak c} \subsetneq {\mathfrak c}_i$, there exists an element $a_i \in {\mathfrak c}_i$ with $a_i \notin \mathfrak c$. We may assume that each $a_i$ is totally positive. Indeed, since the images of $a_iO$ and $a_i{\mathfrak p}_i$ in ${\mathbb{R}}^I$ are ${\mathbb{Z}}$-lattices, there exists $b_i \in O$ for each $i$ such that $a_ib_i$ is totally positive. We may further assume that $b_i \notin {\mathfrak p}_i$, since each $a_i{\mathfrak p}_i$ is a proper sublattice of $a_iO$ in ${\mathbb{R}}^I$. Then the totally positive element $a=a_1+a_2+ \cdots +a_l \in \mathfrak a$ satisfies the assertion, that is, we have $a \notin {\mathfrak a} {\mathfrak p}_i$ for each $i$. Otherwise, we see that $a_i=a-\sum_{j \neq i} a_j \in {\mathfrak a} {\mathfrak p}_i$, which contradicts the choice of $a_i$. Hence we have $aO=\mathfrak a \mathfrak n$ with an integral ideal $\mathfrak n$ prime to $\mathfrak m$.
\end{proof}
In the proof of Lemma \ref{lem:208}, we have implicitly assumed that for any ${\mathbb{Z}}$-lattice $L$ of ${\mathbb{R}}^d$, we can choose a ${\mathbb{Z}}$-basis $v_1, v_2, \ldots, v_d$ of $L$ so that $v_1, v_2, \ldots, v_d \in ({\mathbb{R}}_{>0})^d$. We give a proof of this elementary fact in the appendix. 
\begin{lemma}
\label{lem:209}
Let ${\mathfrak m}$ be a non-zero integral ideal. We can choose a representative set $\left\{{\mathfrak t}_\lambda \right\}$ for ${\rm Cl}_F^+$ so that ${\mathfrak b}{\mathfrak d}{\mathfrak t}_\lambda$ is integral and prime to $\mathfrak m$ for each $\lambda \in {\rm Cl}_F^+$.
\end{lemma}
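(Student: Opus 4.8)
The plan is to reduce the statement to Lemma~\ref{lem:208}, the only subtlety being to keep everything inside the \emph{narrow} (rather than wide) ideal class group. Write $[{\mathfrak f}]$ for the narrow class of a fractional ideal ${\mathfrak f}$. First I would record the elementary fact that every narrow class contains an integral representative: given a fractional ideal ${\mathfrak f}$, its inverse ${\mathfrak f}^{-1}$ is a full ${\mathbb{Z}}$-lattice in ${\mathbb{R}}^I$, which by the lattice fact proved in the appendix admits a ${\mathbb{Z}}$-basis lying in $({\mathbb{R}}_{>0})^I$; the sum of such a basis is a totally positive element $a \in {\mathfrak f}^{-1}$, so that $a{\mathfrak f} \subseteq O$ is integral and, since $a \in F_+$, lies in the same narrow class as ${\mathfrak f}$.

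Next I would upgrade this to the key intermediate claim that every narrow class $\nu$ contains an integral ideal that is prime to ${\mathfrak m}$. Choosing an integral representative ${\mathfrak a}$ of the inverse class $\nu^{-1}$, Lemma~\ref{lem:208} furnishes a totally positive $a \in {\mathfrak a}$ with $aO={\mathfrak a}{\mathfrak n}$ and ${\mathfrak n}$ prime to ${\mathfrak m}$. Then ${\mathfrak n}=aO \cdot {\mathfrak a}^{-1}$ is integral and prime to ${\mathfrak m}$, and because $a$ is totally positive its narrow class is $[{\mathfrak n}]=[{\mathfrak a}]^{-1}=\nu$, as required.

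Finally, I would apply this claim to the class $\nu_\lambda=[{\mathfrak b}{\mathfrak d}] \cdot \lambda$ for each $\lambda \in {\rm Cl}_F^+$, obtaining an integral ideal ${\mathfrak J}_\lambda$ prime to ${\mathfrak m}$ with $[{\mathfrak J}_\lambda]=\nu_\lambda$, and set ${\mathfrak t}_\lambda=({\mathfrak b}{\mathfrak d})^{-1}{\mathfrak J}_\lambda$. A short class computation gives $[{\mathfrak t}_\lambda]=[{\mathfrak b}{\mathfrak d}]^{-1}\nu_\lambda=\lambda$, so ${\mathfrak t}_\lambda$ is a legitimate representative of $\lambda$, while by construction ${\mathfrak b}{\mathfrak d}{\mathfrak t}_\lambda={\mathfrak J}_\lambda$ is integral and prime to ${\mathfrak m}$. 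The genuine content is already contained in Lemma~\ref{lem:208}, which produces an ideal prime to ${\mathfrak m}$ within a prescribed class; the only point requiring care, and the place where the argument could go wrong, is the bookkeeping that keeps the whole construction inside ${\rm Cl}_F^+$---specifically, invoking the total positivity of the element $a$ supplied by Lemma~\ref{lem:208} so that passing to $aO \cdot {\mathfrak a}^{-1}$ leaves the narrow class unchanged. Beyond this I expect no real obstacle.
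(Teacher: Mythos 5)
Your proof is correct, and it reaches the conclusion by a somewhat different route than the paper, although both arguments have Lemma~\ref{lem:208} as their only real input and both hinge on the same point you flag, namely that the adjusting element must be totally positive so that the narrow class is preserved. The paper starts from an integral representative ${\mathfrak c}$ of $\lambda$, splits ${\mathfrak b}{\mathfrak d}{\mathfrak c}={\mathfrak c}_0{\mathfrak c}'$ into the part sharing prime factors with ${\mathfrak m}$ and the coprime part, picks a totally positive $c\in{\mathfrak c}_0$ with $cO={\mathfrak c}_0{\mathfrak n}$, and then uses Lemma~\ref{lem:208} to find a totally positive $a\in{\mathfrak n}$ with $aO={\mathfrak n}{\mathfrak n}'$ and ${\mathfrak n}'$ prime to ${\mathfrak m}$, so that rescaling by the totally positive ratio $a/c$ makes $\tfrac{a}{c}{\mathfrak b}{\mathfrak d}{\mathfrak c}={\mathfrak n}'{\mathfrak c}'$ integral and prime to ${\mathfrak m}$. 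You instead isolate the cleaner intermediate statement that \emph{every} narrow class contains an integral ideal prime to ${\mathfrak m}$ (one application of Lemma~\ref{lem:208} to an integral representative of the inverse class, the total positivity of $a$ guaranteeing $[aO\cdot{\mathfrak a}^{-1}]=[{\mathfrak a}]^{-1}$ in ${\rm Cl}_F^+$) and then apply it to the translated class $[{\mathfrak b}{\mathfrak d}]\cdot\lambda$, defining ${\mathfrak t}_\lambda=({\mathfrak b}{\mathfrak d})^{-1}{\mathfrak J}_\lambda$. This buys a more modular argument with a single invocation of Lemma~\ref{lem:208} and no decomposition ${\mathfrak c}_0{\mathfrak c}'$ or two-element ratio $a/c$, at the cost of passing through the inverse class; your preliminary observation that every narrow class has an integral representative (via the appendix lattice lemma) is also used implicitly, without comment, in the paper's first sentence. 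Both treatments of the narrow-versus-wide bookkeeping are sound.
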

\begin{proof}
For $\lambda \in {\rm Cl}_F^+$, we choose an integral ideal $\mathfrak c$ belonging to the class of $\lambda$. We decompose ${\mathfrak b}{\mathfrak d}{\mathfrak c}$ as ${\mathfrak b}{\mathfrak d}{\mathfrak c}={\mathfrak c}_0{\mathfrak c}'$ so that the prime factors of ${\mathfrak c}_0$ are all in common with $\mathfrak m$ and ${\mathfrak c}'$ is prime to $\mathfrak m$. We choose a totally positive element $c \in {\mathfrak c}_0$. Then we have $cO={\mathfrak c}_0 \mathfrak n$ for some integral ideal $\mathfrak n$. By Lemma \ref{lem:208}, there exists a totally positive element $a \in \mathfrak n$ so that $aO=\mathfrak n {\mathfrak n}'$ with ${\mathfrak n}'$ prime to $\mathfrak m$. Therefore we have
$$\frac{a}{c}{\mathfrak b}{\mathfrak d}{\mathfrak c}=\frac{{\mathfrak n}{\mathfrak n}'{\mathfrak c}_0 {\mathfrak c}'}{{\mathfrak c}_0 \mathfrak n}={\mathfrak n}' {\mathfrak c}',$$
which is integral and prime to ${\mathfrak m}$. Since $a/c$ is totally positive by our choice, the assertion holds. 
\end{proof}
\begin{remark}
\label{rmk:210}
Lemmas \ref{lem:208} and \ref{lem:209} are originally Exercise 2.5.2 of \cite{H1}. The answer given at the end of the book is not enough. He did not mention whether the element $a/c$ in the proof of Lemma \ref{lem:209} can be taken to be totally positive or not. 
\end{remark}
Hereafter we write ${\mathfrak m}={\mathfrak a}{\mathfrak b}$. As a consequence of Lemma \ref{lem:209}, we may and do assume the following two conditions:
\begin{itemize}
\item for each $\lambda \in {\rm Cl}_F^+$, ${\mathfrak b}{\mathfrak d}{\mathfrak t}_\lambda$ is integral and prime to $\mathfrak m$;
\item each representative fractional ideal $\mathfrak r$ of ${\rm Cl}_F$ is integral and prime to $\mathfrak m$.
\end{itemize}
\begin{proof}(Outline of the proof of Proposition \ref{prop:207})
The Eisenstein series $E_k(\eta, \psi)$ in Proposition \ref{prop:207} is explicitly given in \cite{S}  Proposition 3.2 and \cite{DDP}  Proposition 2.1. We recall the definition. For $s \in \mathbb{C}$, the series
\begin{align*}
E_k(\eta, \psi)_\lambda(z, s) &= C\tau(\psi)\frac{{\rm N}({\mathfrak t}_\lambda)^{-\frac{k}{2}}}{{\rm N}({\mathfrak b})}\sum_{{\mathfrak r} \in {\rm Cl}_F}{\rm N}({\mathfrak r})^k \sum_{\substack{
 a \in {\mathfrak r}, \\
 b \in ({\mathfrak b}{\mathfrak d}{\mathfrak t}_\lambda)^{-1} {\mathfrak r}, \\
 (a, b) \bmod U, \\
 (a, b) \neq (0, 0)
}}
\frac{{\rm sgn}(a)^q \eta(a{\mathfrak r}^{-1}){\rm sgn}(-b)^r \psi^{-1}(-b {\mathfrak b}{\mathfrak d}{\mathfrak t}_\lambda{\mathfrak r}^{-1})}{(az+b)^k |az+b|^{2s}} 
\end{align*}
is convergent on the right half plane ${\rm Re}(k+2s)>2$. Here
\begin{align*}
 \tau(\psi) &= \sum_{x \in {\mathfrak b}^{-1}{\mathfrak d}^{-1}/{\mathfrak d}^{-1}} {\rm sgn}(x)^r \psi(x {\mathfrak b}{\mathfrak d})e_F(x)
\end{align*}
is the Gauss sum of $\psi$, $U$ is the subgroup of finite index of $O^{\times}$ defined by
\begin{align*}
 U &= \left\{u \in O^{\times} \mid {\rm N}(u)^k=1, u \equiv 1 \bmod \mathfrak m \right\}
\end{align*}
which acts on $\left\{(a, b) \mid a \in {\mathfrak r}, \ b \in ({\mathfrak b}{\mathfrak d}{\mathfrak t}_\lambda)^{-1} {\mathfrak r}, (a, b) \neq (0, 0) \right\}$ by $u \cdot (a, b)=(ua, ub)$, and
\begin{align*}
 C &= \frac{\sqrt{d_F}\Gamma(k)^d}{[O^{\times} : U]{\rm N}({\mathfrak d}) (-2\pi i)^{kd}}
\end{align*}
where $d_F$ denotes the discriminant of $F$. The definition of $E_k(\eta, \psi)_\lambda(z, s)$ here looks slightly different from that in \cite{DDP}, but in fact two definitions are exactly the same. We have already computed some terms of $E_k(\eta, \psi)_\lambda(z, s)$ in \cite{DDP} by using the hypothesis that $\psi$ is primitive.

It is enough to show that $E_k(\eta, \psi)_\lambda(z, s)$ has a meromorphic continuation in $s$ to the whole complex plane and is holomorphic at $s=0$, and that the $h$-tuple $(E_k(\eta, \psi)_\lambda(z, 0))_{\lambda \in {\rm Cl}_F^+}$ is a Hilbert modular form of prescribed weight, level and character, with the desired Fourier coefficients. Roughly speaking, our strategy is as follows: thanks to Lemma \ref{lem:209}, we can divide $E_k(\eta, \psi)_\lambda(z, s)$ into partial sums
\begin{align*}
 E_k(\eta, \psi)_\lambda(z, s) &= C\tau(\psi)\frac{{\rm N}({\mathfrak t}_\lambda)^{-\frac{k}{2}}}{{\rm N}({\mathfrak b})} \sum_{{\mathfrak r} \in {\rm Cl}_F}{\rm N}({\mathfrak r})^k \sum_{a_1 \in \mathfrak r/\mathfrak r \mathfrak m}{\rm sgn}(a_1)^q \eta(a_1 {\mathfrak r}^{-1}) \nonumber \\
 & \ \times \sum_{a_2 \in ({\mathfrak b}{\mathfrak d}{\mathfrak t}_\lambda)^{-1}{\mathfrak r}/{\mathfrak m} ({\mathfrak b}{\mathfrak d}{\mathfrak t}_\lambda)^{-1}{\mathfrak r}}{\rm sgn}(-a_2)^r \psi^{-1}(-a_2 {\mathfrak b}{\mathfrak d}{\mathfrak t}_\lambda{\mathfrak r}^{-1})G_k(z, s; a_1, a_2, \mathfrak m, \mathfrak r)
\end{align*}
where we define
\begin{equation*}
 G_k(z, s; a_1, a_2, {\mathfrak m}, {\mathfrak r}) = \sum_{\substack{
 a \in {\mathfrak r}, \ b \in ({\mathfrak b}{\mathfrak d}{\mathfrak t}_\lambda)^{-1}{\mathfrak r}, \\
 a-a_1 \in {\mathfrak m}{\mathfrak r}, \ b-a_2 \in {\mathfrak m}({\mathfrak b}{\mathfrak d}{\mathfrak t}_\lambda)^{-1}{\mathfrak r}, \\
 (a, b) \bmod U, \ (a, b) \neq (0, 0) 
}}
\frac{1}{(az+b)^k|az+b|^{2s}} 
\end{equation*}
for ${\mathfrak r} \in {\rm Cl}_F, \ a_1 \in \mathfrak r$, and $a_2 \in ({\mathfrak b}{\mathfrak d}{\mathfrak t}_\lambda)^{-1}\mathfrak r$. Indeed, if $a \in \mathfrak r$ and $b \in ({\mathfrak b}{\mathfrak d}{\mathfrak t}_\lambda)^{-1}\mathfrak r$ satisfy $a-a_1 \in \mathfrak m \mathfrak r$ and $b-a_2 \in {\mathfrak m}({\mathfrak b}{\mathfrak d}{\mathfrak t}_\lambda)^{-1}\mathfrak r$ respectively, then 
\begin{align}
 {\rm sgn}(a)^q\eta(a)={\rm sgn}(a_1)^q\eta(a_1) \ \ & \text{and} \ \ {\rm sgn}(-b)^r\psi^{-1}(b)={\rm sgn}(-a_2)^r\psi^{-1}(a_2). \label{eqn:312}
\end{align}
We need to make a remark on the latter equality. By Lemma \ref{lem:208}, there exists an element $\beta \in {\mathfrak b}{\mathfrak d}{\mathfrak t}_\lambda$ with $\beta O={\mathfrak b}{\mathfrak d}{\mathfrak t}_\lambda \mathfrak n$ and $\mathfrak n$ prime to $\mathfrak b$. We note that $\beta$ itself is prime to $\mathfrak b$ since ${\mathfrak b}{\mathfrak d}{\mathfrak t}_\lambda$ is prime to ${\mathfrak m}$. Then $\beta b, \beta a_2 \in {\mathfrak r}{\mathfrak n}$ and $\beta(b-a_2) \in {\mathfrak m}{\mathfrak r}{\mathfrak n}$ so that
\begin{align}
 {\rm sgn}(-\beta b)^r\psi^{-1}(\beta b) &= {\rm sgn}(-\beta a_2)^r\psi^{-1}(\beta a_2). \label{eqn:314}
\end{align}
Dividing both sides of (\ref{eqn:314}) by ${\rm sgn}(\beta)^r \psi^{-1}(\beta) \neq 0$ gives the equality (\ref{eqn:312}). We can verify Proposition \ref{prop:207} by computing the Fourier expansion of each $G_k(z, s; a_1, a_2, {\mathfrak m}, {\mathfrak r})$. 
\end{proof}


\section{Equivalence classes of cusps and the main theorem}
Section 3 is devoted a formulation and the proof of our main theorem. We keep using the notation at the beginning of Section 2. 
\subsection{Constant terms of Eisenstein series under slash operators}
In this subsection, we present a detailed computation of the normalized constant term of $E_k(\eta, \psi)$ under the slash operators defined below. First we introduce some congruence subgroups of $GL_2^{+}(F)$ and $SL_2(F)$. The notation in the following definition is basically in accordance with \cite{H2} Chapter 4, Section 1.3.
\begin{definition}
\label{defn:301}
Let ${\mathfrak n}$ be an integral ideal and ${\mathfrak c}$ a fractional ideal of $F$. $\Gamma_1({\mathfrak n}; O, {\mathfrak c})$ is the subgroup of $GL_2^{+}(F)$ defined by
\begin{align*}
 \Gamma({\mathfrak n}; O, {\mathfrak c}) &= \left\{\left(
\begin{array}{cc}
 a & b \\
 c & d
\end{array}
\right) \in GL_2^+(F) \bigg\vert a, d \in O, b \in {\mathfrak c}^{-1}{\mathfrak d}^{-1}, c \in {\mathfrak n}{\mathfrak c}{\mathfrak d} \ \text{and} \ ad-bc \in O^{\times} \right\}.
\end{align*}
Hereafter we mainly consider the subgroup $\Gamma^1({\mathfrak n}; O, {\mathfrak c})=SL_2(F) \cap \Gamma({\mathfrak n}; O, {\mathfrak c})$ of $SL_2(F)$. 
\end{definition}
\begin{remark}
\label{rmk:302}
\begin{itemize}
\item[(i)] When $F=\mathbb{Q}$, ${\mathfrak n}=N{\mathbb{Z}} \ (N \in {\mathbb{Z}}_{>0})$ and ${\mathfrak c}={\mathbb{Z}}$, we have 
$$\Gamma({\mathfrak n}; O, {\mathfrak c})=\Gamma^1({\mathfrak n}; O, {\mathfrak c})=\Gamma_0(N).$$
\item[(ii)] When ${\mathfrak n}={\mathfrak b}$ and ${\mathfrak c}={\mathfrak t}_\lambda$ for $\lambda \in {\rm Cl}_F^+$, $\Gamma({\mathfrak b}; O, {\mathfrak t}_\lambda)=\Gamma_\lambda({\mathfrak b})$, which was defined just before Definition \ref{defn:202}.
\end{itemize}
\end{remark}
From now on, we write $\Gamma_\lambda^*({\mathfrak n})$ for $\Gamma^*({\mathfrak n}; O, {\mathfrak t}_\lambda)$ ($*=1$ or empty). We define the slash operator on the space of Hilbert modular forms. 
\begin{definition}
\label{defn:303}
Let $f=(f_\lambda)_{\lambda \in {\rm Cl}_F^+}$ be a Hilbert modular form and $A=(A_\lambda)_{\lambda \in {\rm Cl}_F^+} \in SL_2(F)^h$ an $h$-tuple of matrices. The slash operator is defined by
\begin{align}
 f|A & = (f_\lambda|A_\lambda)_{\lambda \in {\rm Cl}_F^+}. \label{eqn:329}
\end{align}
\end{definition}
The main result of this subsection is as follows:
\begin{proposition}
\label{prop:304}
Let $\eta, \psi, k$ be as in Section $3.2 \ ($in particular satisfying $(\ref{eqn:308}))$, and $A=(A_\lambda)_{\lambda \in {\rm Cl}_F^+}$ a slash operator with
\begin{align*}
 A_\lambda &= \left(
\begin{array}{cc}
 \alpha_\lambda & \beta_\lambda \\
 \gamma_\lambda & \delta_\lambda
\end{array}
\right) \in \Gamma_\lambda^1(O)
\end{align*}
for each $\lambda \in {\rm Cl}_F^+$. Then
\begin{align*}
 c_\lambda(0, E_k(\eta, \psi)|A) &= 0
\end{align*}
unless $\gamma_\lambda \in {\mathfrak b}{\mathfrak d}{\mathfrak t}_\lambda$. If this is the case, we have
\begin{align*}
 c_\lambda(0, E_k(\eta, \psi)|A) &= \frac{1}{2^d}\frac{\tau(\eta \psi^{-1})}{\tau(\psi^{-1})} \left(\frac{{\rm N}({\mathfrak b})}{{\rm N}({\mathfrak c})} \right)^k {\rm  sgn}(-\gamma_\lambda)^q \eta(\gamma_\lambda ({\mathfrak b}{\mathfrak d}{\mathfrak t}_\lambda)^{-1}) {\rm sgn}(\alpha_\lambda)^r \psi^{-1}(\alpha_\lambda) \\
 & \ \ \times L(\eta^{-1} \psi, 1-k) \prod_{{\mathfrak q} \mid {\mathfrak m}, \ {\mathfrak q} \nmid {\mathfrak c}} (1-\eta \psi^{-1}({\mathfrak q}){\rm N}({\mathfrak q})^{-k}), 
\end{align*}
where ${\mathfrak c}$ is the conductor of $\eta^{-1} \psi$ and the last product runs over all prime ideals ${\mathfrak q}$ with ${\mathfrak q} \mid {\mathfrak m}$ and ${\mathfrak q} \nmid {\mathfrak c}$.
\end{proposition}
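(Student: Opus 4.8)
The plan is to compute the constant term directly from the Dirichlet-series-valued definition of $E_k(\eta,\psi)_\lambda(z,s)$ recalled in the outline of Proposition~\ref{prop:207}, evaluated at $s=0$, after applying the slash operator $|A_\lambda$. First I would record how the inner lattice sum transforms under $A_\lambda=\left(\begin{smallmatrix}\alpha_\lambda&\beta_\lambda\\\gamma_\lambda&\delta_\lambda\end{smallmatrix}\right)\in\Gamma_\lambda^1(O)$. Writing out $\det(A_\lambda)=1$ and the cocycle $(cz+d)^{-k}$, the substitution $z\mapsto A_\lambda z$ sends the pair $(a,b)$ in the summation to $(a\alpha_\lambda+b\gamma_\lambda,\,a\beta_\lambda+b\delta_\lambda)$, so that $az+b$ becomes $(a\alpha_\lambda+b\gamma_\lambda)z+(a\beta_\lambda+b\delta_\lambda)$ up to the automorphy factor. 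Crucially, because $A_\lambda\in\Gamma_\lambda^1(O)$ we have $\alpha_\lambda,\delta_\lambda\in O$, $\beta_\lambda\in{\mathfrak d}^{-1}{\mathfrak t}_\lambda^{-1}$ and $\gamma_\lambda\in{\mathfrak d}{\mathfrak t}_\lambda$, so this change of variables preserves the lattice conditions $a\in{\mathfrak r}$, $b\in({\mathfrak b}{\mathfrak d}{\mathfrak t}_\lambda)^{-1}{\mathfrak r}$; it is merely a reindexing of the same double sum. Thus $E_k(\eta,\psi)_\lambda|A_\lambda$ is again a series of the same shape but with the sign/character factors ${\rm sgn}(a)^q\eta(a{\mathfrak r}^{-1})\,{\rm sgn}(-b)^r\psi^{-1}(\cdots)$ evaluated on the transformed arguments.

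Next I would extract the constant term, i.e.\ the coefficient of the trivial Fourier mode $e_F(0\cdot z)$. Following the outline, the non-constant Fourier modes come from the pairs with $a\neq 0$, while the constant term comes precisely from the terms with $a=0$ (these give the partial $L$-value contribution through the $b$-sum) together with the value at $s=0$ of the meromorphic continuation. After the slash, the locus $a=0$ becomes the locus where the new top-left entry $a\alpha_\lambda+b\gamma_\lambda$ vanishes; since $\alpha_\lambda\neq 0$ and we are reindexing, I would identify this with a coset condition forcing $b$ (hence the old index) into a specific sublattice determined by $\gamma_\lambda$. This is where the dichotomy arises: if $\gamma_\lambda\notin{\mathfrak b}{\mathfrak d}{\mathfrak t}_\lambda$, the residue classes mod ${\mathfrak m}$ that survive cause the character sum $\sum_{a_2}{\rm sgn}(-a_2)^r\psi^{-1}(\cdots)$ to vanish by the orthogonality relations for $\psi$ (exactly as in the classical weight-$2$ cancellation in Example~\ref{exam:103}), giving $c_\lambda(0,E_k(\eta,\psi)|A)=0$. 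If $\gamma_\lambda\in{\mathfrak b}{\mathfrak d}{\mathfrak t}_\lambda$, the surviving terms collapse to a single class and produce the claimed nonzero value.

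To assemble the nonzero value I would carry out the $b$-sum over the sublattice, which is a Hurwitz-type zeta sum that continues meromorphically and whose value at $s=0$ is a Hecke $L$-value. The character factors ${\rm sgn}(\alpha_\lambda)^r\psi^{-1}(\alpha_\lambda)$ and ${\rm sgn}(-\gamma_\lambda)^q\eta(\gamma_\lambda({\mathfrak b}{\mathfrak d}{\mathfrak t}_\lambda)^{-1})$ fall out from evaluating $\eta$ and $\psi^{-1}$ on the transformed arguments $a\alpha_\lambda+b\gamma_\lambda$ and $a\beta_\lambda+b\delta_\lambda$ on the surviving locus. The normalization constants $C$, $\tau(\psi)$, the factor ${\rm N}({\mathfrak b})^{-1}$, and the sum over ${\mathfrak r}\in{\rm Cl}_F$ recombine, via the relation between the Gauss sums $\tau(\eta\psi^{-1})/\tau(\psi^{-1})$, into the prefactor $2^{-d}\,\tau(\eta\psi^{-1})/\tau(\psi^{-1})\,({\rm N}({\mathfrak b})/{\rm N}({\mathfrak c}))^k$. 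The Euler factors $\prod_{{\mathfrak q}\mid{\mathfrak m},\,{\mathfrak q}\nmid{\mathfrak c}}(1-\eta\psi^{-1}({\mathfrak q}){\rm N}({\mathfrak q})^{-k})$ appear because $\eta^{-1}\psi$ has conductor ${\mathfrak c}$ possibly smaller than ${\mathfrak m}$, so passing from the imprimitive $L$-function implicit in the partial sums to the primitive $L(\eta^{-1}\psi,1-k)$ removes the Euler factors at primes dividing ${\mathfrak m}$ but not ${\mathfrak c}$.

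The main obstacle I expect is the bookkeeping of Gauss sums and the passage from the imprimitive to the primitive $L$-function: the conductor ${\mathfrak c}={\rm cond}(\eta^{-1}\psi)$ can differ from both ${\mathfrak a}$ and ${\mathfrak b}$, and one must track carefully how the Gauss sum $\tau(\psi^{-1})$ attached to level ${\mathfrak b}$ relates to $\tau(\eta\psi^{-1})$ attached to level ${\mathfrak c}$, including the precise power of ${\rm N}({\mathfrak b})/{\rm N}({\mathfrak c})$ and the correct sign characters. The lattice reindexing and the $s=0$ continuation are structurally the same as in the one-dimensional case and should go through as in Example~\ref{exam:103} and the outline above, but getting every normalization factor, every ${\rm sgn}$, and the Euler product at the bad primes exactly right is the delicate part.
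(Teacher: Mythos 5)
Your overall route is the one the paper takes: write out $(E_k(\eta,\psi)_\lambda|A_\lambda)(z,s)$ from the lattice-sum definition, observe that the constant term comes from the locus $a\alpha_\lambda+b\gamma_\lambda=0$, parametrize that locus by $d=a\beta_\lambda+b\delta_\lambda$ via $(a,b)=(-d\gamma_\lambda,d\alpha_\lambda)$, recognize the resulting Dirichlet series at $s=0$ as $L(\eta\psi^{-1},k)$ times the Euler factors at primes dividing ${\mathfrak m}$ but not ${\mathfrak c}$, and convert to $L(\eta^{-1}\psi,1-k)$ by the functional equation (which is precisely where the Gauss sum $\tau(\eta^{-1}\psi)$ and the power ${\rm N}({\mathfrak c})^{1-k}$ enter). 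Two of your intermediate claims, however, would fail as stated. First, a small one: the map $(a,b)\mapsto(a\alpha_\lambda+b\gamma_\lambda,\,a\beta_\lambda+b\delta_\lambda)$ is \emph{not} a reindexing of the same double sum, because $A_\lambda\in\Gamma_\lambda^1(O)$ only gives $\gamma_\lambda\in{\mathfrak d}{\mathfrak t}_\lambda$, so $b\gamma_\lambda$ lands in ${\mathfrak b}^{-1}{\mathfrak r}$ rather than in ${\mathfrak r}$; if it were a genuine reindexing, the constant term could never vanish, contradicting the statement you are proving. You do not really use this claim, but it should be dropped.

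The substantive gap is your mechanism for the vanishing when $\gamma_\lambda\notin{\mathfrak b}{\mathfrak d}{\mathfrak t}_\lambda$. You propose to kill the constant term by orthogonality of $\psi$, as in the weight-two cancellation of Example \ref{exam:103}. But on the locus $a\alpha_\lambda+b\gamma_\lambda=0$ each surviving term carries the weight ${\rm N}(a\beta_\lambda+b\delta_\lambda)^{-k}$, which varies with the residue class; the sum is an $L$-series, not a finite character sum with constant coefficients, so orthogonality gives nothing (and would in any case produce a generically nonzero $L$-value, not $0$). The correct argument is termwise: write $\gamma_\lambda O={\mathfrak n}{\mathfrak d}{\mathfrak t}_\lambda$; the relation $b\gamma_\lambda=-a\alpha_\lambda\in{\mathfrak r}$ together with $b\in({\mathfrak b}{\mathfrak d}{\mathfrak t}_\lambda)^{-1}{\mathfrak r}$ forces $b\gamma_\lambda\in({\mathfrak n}{\mathfrak b}^{-1}{\mathfrak r})\cap{\mathfrak r}$, and if some prime ${\mathfrak p}$ divides ${\mathfrak b}$ to a strictly higher power than it divides ${\mathfrak n}$ (which is exactly the case ${\mathfrak b}\nmid{\mathfrak n}$, i.e.\ $\gamma_\lambda\notin{\mathfrak b}{\mathfrak d}{\mathfrak t}_\lambda$), then the ideal $b{\mathfrak b}{\mathfrak d}{\mathfrak t}_\lambda{\mathfrak r}^{-1}$ is divisible by ${\mathfrak p}$. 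Since $\psi^{-1}$ is a ray class character modulo ${\mathfrak b}$, extended by zero on ideals not prime to ${\mathfrak b}$, every factor $\psi^{-1}(-b{\mathfrak b}{\mathfrak d}{\mathfrak t}_\lambda{\mathfrak r}^{-1})$ vanishes individually, and the constant term is zero before any summation. With that repair, the remainder of your plan (the bijection onto $({\mathfrak b}{\mathfrak d}{\mathfrak t}_\lambda)^{-1}{\mathfrak r}/U$, the passage from the imprimitive to the primitive $L$-function, and the functional equation) matches the paper's proof.
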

\begin{remark}
\label{rmk:305}
Here we make two remarks on previously known results. 
\begin{itemize}
\item[(i)] Ohta computed the constant terms of Eisenstein series of weight $2$ and level $\Gamma_1(Np^r)$ over ${\mathbb{Q}}$, at all equivalence classes of cusps (Proposition 2.5.5 of \cite{O}). Here $p \geq 5$ is a prime number, $N$ is a positive integer prime to $p$, and $r \geq 1$ is an integer. Proposition \ref{prop:304} is a generalization of his result. Indeed we have $\Gamma_\lambda^1(O)=SL_2(\mathbb{Z})$ when $F={\mathbb{Q}}$ and the condition $\gamma_\lambda \in {\mathfrak b}{\mathfrak d}{\mathfrak t}_\lambda$ here corresponds to $u \mid c$ in \cite{O}. 
\item[(ii)] This proposition also implies Proposition 2.3 of \cite{DDP}, where Dasgupta, Darmon and Pollack computed $c_\lambda(0, E_k(\eta, \psi)|A)$ for 
\begin{align*}
 A = (A_\lambda)_{\lambda \in {\rm Cl}_F^+} &= \left(\left(
\begin{array}{cc}
 1& x_\lambda \\
 \alpha_\lambda & y_\lambda
\end{array} \right)\right)_{\lambda \in {\rm Cl}_F^+} \in SL_2(F)^h
\end{align*}
with $\alpha_\lambda \in {\mathfrak p}{\mathfrak d}{\mathfrak t}_\lambda, \ x_\lambda \in {\mathfrak d}^{-1}{\mathfrak t}_\lambda^{-1}, \ \text{and} \ y_\lambda \in {\mathfrak n}$. Moreover they imposed the hypothesis ${\mathfrak a}{\mathfrak b}={\mathfrak n}{\mathfrak p}$ (here $\mathfrak p$ is a prime ideal and $\mathfrak n$ is an integral ideal prime to $\mathfrak p$). 
\end{itemize}
\end{remark}
\begin{proof}
Hereafter we fix $\lambda \in {\rm Cl}_F^+$. We write down $(E_k(\eta, \psi)_\lambda|A_\lambda)(z, s)$ according to the definition:
\begin{align*}
 (E_k(\eta, \psi)_\lambda|A_\lambda)(z, s) &= C\tau(\psi)\frac{{\rm N}({\mathfrak t}_\lambda)^{-\frac{k}{2}}}{{\rm N}({\mathfrak b})} \sum_{{\mathfrak r} \in {\rm Cl}_F}{\rm N}({\mathfrak r})^k  \\
 & \ \times \sum_{\substack{
 a \in {\mathfrak r}, \\
 b \in ({\mathfrak b}{\mathfrak d}{\mathfrak t}_\lambda)^{-1} {\mathfrak r}, \\
 (a, b) \bmod U, \\
 (a, b) \neq (0, 0)
}} \frac{{\rm sgn}(a)^q \eta(a {\mathfrak r}^{-1}) {\rm sgn}(-b)^r \psi^{-1}(-b {\mathfrak b}{\mathfrak d}{\mathfrak t}_\lambda{\mathfrak r}^{-1}) |\gamma_\lambda z+\delta_\lambda|^{2s}}{((a\alpha_\lambda+b\gamma_\lambda)z+(a\beta_\lambda +b\delta_\lambda))^k |(a\alpha_\lambda+b\gamma_\lambda)z+(a\beta_\lambda +b\delta_\lambda)|^{2s}}.
\end{align*}
We note that the constant term arises from terms with $a\alpha_\lambda+b\gamma_\lambda=0$ and this implies $b\gamma_\lambda=-a\alpha_\lambda \in {\mathfrak r}$. On the other hand the condition $\gamma_\lambda \in {\mathfrak d}{\mathfrak t}_\lambda$ implies that there exists an integral ideal $\mathfrak n$ with $\gamma_\lambda O={\mathfrak n}{\mathfrak d}{\mathfrak t}_\lambda$ and hence $b\gamma_\lambda \in {\mathfrak n}{\mathfrak b}^{-1}{\mathfrak r}$. Our strategy is to focus on the ideal $({\mathfrak n}{\mathfrak b}^{-1}{\mathfrak r}) \cap {\mathfrak r}=({\mathfrak n} \cap {\mathfrak b}){\mathfrak b}^{-1}{\mathfrak r}$. We divide the argument into two cases: 
\begin{itemize}
\item[Case 1:] ${\mathfrak b} \nmid {\mathfrak n}$. Then there exists a prime factor ${\mathfrak p}$ of ${\mathfrak b}$ which satisfies
\begin{align*}
 {\mathfrak n} &= {\mathfrak p}^e {\mathfrak n}', \ {\mathfrak b}={\mathfrak p}^f {\mathfrak b}' \ (e \in {\mathbb{Z}}_{\geq 0}, \ f \in {\mathbb{Z}}_{>0}, \ {\mathfrak p} \nmid {\mathfrak n}'{\mathfrak b}') \ \text{and} \ e<f.
\end{align*}
Then $b\gamma_\lambda \in {\mathfrak n}{\mathfrak b}^{-1}{\mathfrak r} \cap {\mathfrak r}=({{\mathfrak b}'})^{-1}({\mathfrak n}' \cap {\mathfrak b}'){\mathfrak r}$. Thus $b \in {\mathfrak n}^{-1}({{\mathfrak b}'})^{-1}({\mathfrak n}' \cap {\mathfrak b}'){\mathfrak d}^{-1}{\mathfrak t}_\lambda^{-1}{\mathfrak r}$ and $b {\mathfrak b}{\mathfrak d}{\mathfrak t}_\lambda{\mathfrak r}^{-1} \subset {\mathfrak p}^{f-e}({{\mathfrak n}'})^{-1}({\mathfrak n}' \cap {\mathfrak b}') \subset {\mathfrak p}^{f-e}$. Since $f-e>0$, $b{\mathfrak b}{\mathfrak d}{\mathfrak t}_\lambda{\mathfrak r}^{-1}$ is not prime to ${\mathfrak b}$ and thus ${\rm sgn}(-b)^r \psi^{-1}(-b{\mathfrak b}{\mathfrak d}{\mathfrak t}_\lambda{\mathfrak r}^{-1})=0$. 
\item[Case 2:] $\mathfrak b \mid \mathfrak n$. In this case, we know that $\gamma_\lambda \in {\mathfrak b}{\mathfrak d}{\mathfrak t}_\lambda$. Then the matrix $A_\lambda$ induces an isomorphism
\begin{align*}
 \left\{(a, b) \mid a \in {\mathfrak r}, \ b \in ({\mathfrak b}{\mathfrak d}{\mathfrak t}_\lambda)^{-1}{\mathfrak r}, \ a\alpha_\lambda+b\gamma_\lambda=0 \right\}/U & \rightarrow ({\mathfrak b}{\mathfrak d}{\mathfrak t}_\lambda)^{-1}{\mathfrak r}/U; \nonumber \\
 (a, b) & \mapsto a\beta_\lambda+b\delta_\lambda 
\end{align*}
(the inverse map is given by $d \mapsto (-d\gamma_\lambda, d\alpha_\lambda)$). Then
\begin{align}
 & c_\lambda(0, E_k(\eta, \psi)|A) \nonumber \\
 &= C\tau(\psi)\frac{{\rm N}({\mathfrak t}_\lambda)^{-k}}{{\rm N}({\mathfrak b})} \sum_{{\mathfrak r} \in {\rm Cl}_F}{\rm N}({\mathfrak r})^k \nonumber \\
 & \ \times \sum_{\substack{
 d \in ({\mathfrak b}{\mathfrak d}{\mathfrak t}_\lambda)^{-1} {\mathfrak r}, \\
 d \bmod U, \ d \neq 0
}} {\rm sgn}(-d\gamma_\lambda)^q \eta(-d\gamma_\lambda {\mathfrak r}^{-1}) {\rm sgn}(-d\alpha_\lambda)^r \psi^{-1}(-d\alpha_\lambda {\mathfrak b}{\mathfrak d}{\mathfrak t}_\lambda{\mathfrak r}^{-1}) {\rm N}(b)^{-k} \nonumber \\
 &= C\frac{\tau(\psi){\rm N}({\mathfrak b}{\mathfrak d})^k {\rm sgn}(\gamma_\lambda)^q\eta(\gamma_\lambda){\rm sgn}(\alpha_\lambda)^r\psi^{-1}(\alpha_\lambda)[O^{\times} : U]}{(-1)^{kd}{\rm N}({\mathfrak b})\eta({\mathfrak b}{\mathfrak d}{\mathfrak t}_\lambda)} \nonumber \\
 & \ \times L(\eta\psi^{-1}, k) \prod_{{\mathfrak q} \mid {\mathfrak m}, \ {\mathfrak q} \nmid {\mathfrak c}} (1-\eta\psi^{-1}({\mathfrak q}){\rm N}({\mathfrak q})^{-k}). \label{eqn:332}
\end{align}
We use the functional equation for $L(\eta\psi^{-1}, s)$ (see \cite{M} Chapter 3, Section 3):
\begin{align}
 \frac{|d_F|^{\frac{1}{2}-k}{\rm N}({\mathfrak c})^{1-k}(2\pi i)^{kd}}
{2^d\Gamma(k)^d\tau(\eta^{-1}\psi)} L(\eta^{-1}\psi, 1-k) &= L(\eta\psi^{-1}, k). \label{eqn:333}
\end{align}
We obtain the desired result by combining equalities (\ref{eqn:332}) and (\ref{eqn:333}). 
\end{itemize}
\end{proof}

\subsection{The equivalence classes of cusps of congruence subgroups}
The purpose of this subsection is to investigate the equivalence classes of cusps by the action of the subgroup $\Gamma_\lambda^1(O)$. First we describe the set of cusps ${\mathbb{P}}^1(F)$ of ${\mathfrak H}^I$ in terms of a quotient of $SL_2(F)$. Let $B^+(F)$ denote the subgroup of $GL_2^+(F)$ consisting of all upper triangular matrices in $GL_2^+(F)$, and $B^1(F)=B^+(F) \cap SL_2(F)$ its intersection with $SL_2(F)$. The following bijection is well known. 
\begin{lemma}
\label{lem:306}
There is a bijection
\begin{align*}
 SL_2(F)/B^1(F) & \rightarrow {\mathbb{P}}^1(F) ; \ \gamma \mapsto \gamma(\infty).
\end{align*}
\end{lemma}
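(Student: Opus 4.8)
The plan is to realize the map as the quotient by the stabilizer of $\infty$ under the natural action of $SL_2(F)$ on ${\mathbb{P}}^1(F)$, and then read off bijectivity from the orbit--stabilizer principle together with transitivity. Throughout I identify ${\mathbb{P}}^1(F)$ with the set of lines $[x : y]$ in $F^2$, writing a point $z \in F$ as $[z : 1]$ and the point at infinity as $\infty = [1 : 0]$; the action of $\gamma = \left(\begin{smallmatrix} a & b \\ c & d \end{smallmatrix}\right) \in SL_2(F)$ on $z$ by the M\"obius rule $\gamma z = (az+b)/(cz+d)$ extends to the linear action $\gamma \cdot [x : y] = [ax + by : cx + dy]$ on homogeneous coordinates, so that in particular $\gamma(\infty) = \gamma \cdot [1 : 0] = [a : c]$.

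First I would show that the fibre of $\gamma \mapsto \gamma(\infty)$ over $\infty$ is exactly $B^1(F)$. From $\gamma(\infty) = [a : c]$ we see that $\gamma(\infty) = \infty = [1 : 0]$ if and only if $c = 0$, and a matrix in $SL_2(F)$ with lower-left entry $0$ is precisely an upper triangular matrix of determinant $1$, i.e., an element of $B^1(F)$. Hence $B^1(F)$ is the stabilizer of $\infty$. Consequently $\gamma(\infty) = \gamma'(\infty)$ holds if and only if $\gamma^{-1}\gamma'$ fixes $\infty$, that is $\gamma^{-1}\gamma' \in B^1(F)$, which is the same as $\gamma B^1(F) = \gamma' B^1(F)$. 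This single computation gives simultaneously that the map is well defined on the coset space $SL_2(F)/B^1(F)$ and that it is injective.

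It then remains to prove surjectivity, which amounts to transitivity of the $SL_2(F)$-action on ${\mathbb{P}}^1(F)$. Given a point $[x : y] \in {\mathbb{P}}^1(F)$ with $(x, y) \neq (0, 0)$, I would use that $F$ is a field: the vector $(x, y)$ can be completed to a basis of $F^2$, so there exist $b, d \in F$ with $xd - yb = 1$. The matrix $\gamma = \left(\begin{smallmatrix} x & b \\ y & d \end{smallmatrix}\right)$ then lies in $SL_2(F)$ and satisfies $\gamma(\infty) = [x : y]$ by the formula above, so every point of ${\mathbb{P}}^1(F)$ lies in the image.

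This lemma is entirely standard, and there is no genuine obstacle here; the only point requiring a little care is fixing the coordinate convention so that the identification $\infty = [1:0]$ and the M\"obius action are compatible. Once that is pinned down, both injectivity (via the stabilizer computation) and surjectivity (via the field property of $F$, guaranteeing the needed $\det = 1$ completion) are immediate.
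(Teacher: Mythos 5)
Your proof is correct and is the standard orbit--stabilizer argument; the paper itself gives no proof of this lemma, simply calling the bijection well known, and your write-up supplies exactly the argument that is being taken for granted (stabilizer of $\infty$ equals $B^1(F)$, transitivity via completing $(x,y)$ to a determinant-one matrix).
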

Let ${\mathfrak c}$ be a fractional ideal of $F$. Thanks to Lemma \ref{lem:306} we know that the equivalence classes of cusps by the action of $\Gamma(O; O, {\mathfrak c}^{-1})$ is 
$$\Gamma(O; O, {\mathfrak c}^{-1}) \backslash SL_2(F)/B^1(F).$$ 
We now describe this set explicitly (here we consider $\Gamma(O; O, {\mathfrak c}^{-1})$ instead of $\Gamma(O; O, {\mathfrak c})$, in order to be consistent with the notation used in \cite{H2} Chapter 4, Section 1). To $m=\left(
\begin{smallmatrix}
 a & * \\
 c & *
\end{smallmatrix}
\right) \in SL_2(F)$, we associate a fractional ideal $il_{\mathfrak c}(m)=c{\mathfrak c}{\mathfrak d}^{-1}+aO$. If $\gamma=\left(
\begin{smallmatrix}
 e & f \\
 g & h
\end{smallmatrix}
\right)$ is an element of $\Gamma(O; O, {\mathfrak c}^{-1})$, we have $il_{\mathfrak c}(\gamma m)=il_{\mathfrak c}(m)$. Indeed, since we have $e, h \in O, \ f \in {\mathfrak c}{\mathfrak d}^{-1}$ and $g \in {\mathfrak c}^{-1}{\mathfrak d}$ by definition, we see that 
$$(ga+hc){\mathfrak c}{\mathfrak d}^{-1} \subset aO+c{\mathfrak c}{\mathfrak d}^{-1} \ \text{and} \ ea+fc \in aO+c{\mathfrak c}{\mathfrak d}^{-1}.$$
This implies $(ga+hc){\mathfrak c}{\mathfrak d}^{-1}+(ea+fc)O \subset c{\mathfrak c}{\mathfrak d}^{-1}+aO$. The other inclusion follows from the condition $eh-fg=1$, that is, we have
\begin{align*}
 a = a(eh-fg) &= h(ea+fc)-f(ga+hc) \in (ga+hc){\mathfrak c}{\mathfrak d}^{-1}+(ea+fc)O, \ \text{and} \\
 c = c(eh-fg) &= -f(ea+fc)+e(ga+hc) \in (ga+hc){\mathfrak c}{\mathfrak d}^{-1}+(ea+fc)O.
\end{align*}
For upper triangular $b=\left(
\begin{smallmatrix}
 \widetilde{b} & * \\
 0 & *
\end{smallmatrix}
\right) \in B^1(F)$, we have $il_{\mathfrak c}(gb)=\widetilde{b} \cdot il_{\mathfrak c}(g)$. Hence we obtain a map
\begin{align}
 il_{\mathfrak c} : \Gamma(O; O, {\mathfrak c}^{-1}) \backslash SL_2(F)/B^1(F) \rightarrow {\rm Cl}_F & ; \ \left(
\begin{array}{cc}
 a & * \\
 c & * 
\end{array}
\right) \mapsto c{\mathfrak c}{\mathfrak d}^{-1}+aO. \label{eqn:336}
\end{align}
\begin{remark}
\label{rmk:307}
In \cite{H2} Chapter 4, Section 1, the map $il_{\mathfrak c}$ is defined by $il_{\mathfrak c}(m)=c{\mathfrak c}{\mathfrak d}+aO$. However this map is not trivial on $\Gamma(O; O, {\mathfrak c}^{-1})$ in general. Indeed, the class number of the real quadratic field $F={\mathbb{Q}}(\sqrt{10})$ is $2$, and ${\mathfrak p}=(2, \sqrt{10})$ is a non-principal prime ideal above $2$. The discriminant is $d_F=40$, the different is ${\mathfrak d}=(2\sqrt{10})=(5, \sqrt{10}){\mathfrak p}^3$. We let ${\mathfrak c}=O$. We have $(2+\sqrt{10})O=(3, \sqrt{10}-1){\mathfrak p}$ and $2\sqrt{10}{\mathfrak d}^{-1}+(2+\sqrt{10})O=O$. For example
\begin{align*}
 \left(
\begin{array}{cc}
 2+\sqrt{10} & \frac{10+\sqrt{10}}{20} \\
 2\sqrt{10} & 1
\end{array}
\right) & \in \Gamma(O; O, O). 
\end{align*}
On the other hand, we see that 
$$2\sqrt{10}{\mathfrak d}+(2+\sqrt{10})O={\mathfrak p}^6(5, \sqrt{10})^2+(3, \sqrt{10}-1){\mathfrak p}={\mathfrak p}({\mathfrak p}^5(5, \sqrt{10})^2+(3, \sqrt{10}-1))={\mathfrak p}, $$
which is not principal. One can also find the description of the map $il_{\mathfrak c}$ in Chapter 2, Section 2 of \cite{G} (however the notation there is different from ours). 
\end{remark}                            
\begin{proposition}[\cite{G} Proposition 2.22]
\label{prop:308}
The above map  $(\ref{eqn:336})$ is a bijection.
\end{proposition}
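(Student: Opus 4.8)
The map is already shown to be well defined in the discussion preceding the statement, so the plan is to prove that $il_{\mathfrak c}$ is both surjective and injective. The unifying device I would use is the $O$-lattice $M=O\oplus\mathfrak c^{-1}\mathfrak d\subset F^2$, viewed as column vectors. An entrywise check of the same type as the one already carried out for $il_{\mathfrak c}(\gamma m)=il_{\mathfrak c}(m)$ shows that the group $\Gamma(O;O,\mathfrak c^{-1})$ of Definition \ref{defn:301} is exactly the stabilizer of $M$ in $GL_2^+(F)$, i.e. $\mathrm{Aut}_O(M)\cap GL_2^+(F)$, and that $\Gamma^1(O;O,\mathfrak c^{-1})=\mathrm{Aut}_O(M)\cap SL_2(F)$. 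Via Lemma \ref{lem:306} I regard a class in $\Gamma(O;O,\mathfrak c^{-1})\backslash SL_2(F)/B^1(F)$ as a $\Gamma(O;O,\mathfrak c^{-1})$-orbit of a line $\ell=F\binom{a}{c}\in\mathbb P^1(F)$, where $\binom{a}{c}$ is the first column of the representative matrix.

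For surjectivity I start from an integral ideal $\mathfrak r$ representing a given class and produce a column $(a,c)$ with $aO+c\mathfrak c\mathfrak d^{-1}=\mathfrak r$. Choosing any nonzero $a\in\mathfrak r$ makes $a\mathfrak r^{-1}$ integral; applying Lemma \ref{lem:208} (which extends to fractional ideals after clearing denominators) to the ideal $\mathfrak r\mathfrak d\mathfrak c^{-1}$ with modulus $a\mathfrak r^{-1}$ yields $c\in\mathfrak r\mathfrak d\mathfrak c^{-1}$ for which $c\mathfrak c\mathfrak d^{-1}\mathfrak r^{-1}$ is integral and coprime to $a\mathfrak r^{-1}$. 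Then $a\mathfrak r^{-1}+c\mathfrak c\mathfrak d^{-1}\mathfrak r^{-1}=O$, i.e. $aO+c\mathfrak c\mathfrak d^{-1}=\mathfrak r$. Since $F$ is a field and $(a,c)\neq(0,0)$, this column completes to a matrix $m\in SL_2(F)$, and by construction $il_{\mathfrak c}(m)=[\mathfrak r]$.

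For injectivity the crucial computation is that the rank-one summand $M\cap\ell$ satisfies $M\cap\ell\cong il_{\mathfrak c}(m)^{-1}$ as an $O$-module: writing $M\cap\ell=\Lambda\binom{a}{c}$ with $\Lambda=a^{-1}O\cap c^{-1}\mathfrak c^{-1}\mathfrak d$ and using $\mathfrak I_1\cap\mathfrak I_2=\mathfrak I_1\mathfrak I_2(\mathfrak I_1+\mathfrak I_2)^{-1}$ for fractional ideals, one finds $\Lambda=(aO+c\mathfrak c\mathfrak d^{-1})^{-1}$. Hence $il_{\mathfrak c}(m)=il_{\mathfrak c}(m')$ in ${\rm Cl}_F$ forces $M\cap\ell\cong M\cap\ell'$, and since $[M/(M\cap\ell)]=[M][M\cap\ell]^{-1}$ depends only on this class, also $M/(M\cap\ell)\cong M/(M\cap\ell')$. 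Splitting $M=(M\cap\ell)\oplus C=(M\cap\ell')\oplus C'$ and taking the direct sum of chosen isomorphisms between the corresponding summands produces $\phi\in\mathrm{Aut}_O(M)$ with $\phi(\ell)=\ell'$.

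The remaining point, and the step I expect to be the main obstacle, is to force $\phi$ into $\Gamma^1(O;O,\mathfrak c^{-1})$ rather than merely $\mathrm{Aut}_O(M)$: a priori $\det\phi$ is only some unit of $O^\times$ (as $\wedge^2\phi$ preserves the rank-one module $\wedge^2 M$), whereas membership in $GL_2^+(F)$ requires a totally positive determinant. I would remove this by postcomposing with the automorphism $\psi$ that multiplies the summand $M\cap\ell'$ by $(\det\phi)^{-1}$ and fixes $C'$; then $\det(\psi\phi)=1$ and $\psi\phi$ still carries $\ell$ to $\ell'$, so $\gamma:=\psi\phi\in\Gamma^1(O;O,\mathfrak c^{-1})\subseteq\Gamma(O;O,\mathfrak c^{-1})$. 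Consequently $m'^{-1}\gamma m$ fixes $\infty$ and has determinant $1$, hence lies in $B^1(F)$, giving $m'\in\Gamma(O;O,\mathfrak c^{-1})\,m\,B^1(F)$. The two things needing careful justification are the identification of $\mathrm{Aut}_O(M)$ with the congruence group of Definition \ref{defn:301} and this determinant correction; the rest is the standard classification of rank-one projective $O$-modules by their Steinitz class.
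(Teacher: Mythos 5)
Your proof is correct, but it is worth pointing out that the paper itself does not actually prove this proposition: it cites \cite{G}, Proposition 2.22, for the bijection, and only re-proves surjectivity, in the refined form of Proposition \ref{prop:309} (where the representative $A_\lambda$ is chosen so that $\alpha_\lambda O={\mathfrak n}_2{\mathfrak r}_0$ and $\gamma_\lambda O={\mathfrak n}_1{\mathfrak d}{\mathfrak t}_\lambda{\mathfrak r}_0$ with ${\mathfrak n}_1$ prime to ${\mathfrak b}$, because those extra coprimality conditions are what the constant-term computation in Section 3.3 actually needs). Your surjectivity step is the same mechanism as Proposition \ref{prop:309} --- build the first column $(a,c)$ with $aO+c{\mathfrak c}{\mathfrak d}^{-1}={\mathfrak r}$ using a Lemma \ref{lem:208}-type avoidance argument and complete it to a matrix in $SL_2(F)$ --- just without the refinement. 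The injectivity argument, via the lattice $M=O\oplus{\mathfrak c}^{-1}{\mathfrak d}$, the computation $M\cap\ell\cong il_{\mathfrak c}(m)^{-1}$ (which I checked: $a^{-1}O\cap c^{-1}{\mathfrak c}^{-1}{\mathfrak d}=(aO+c{\mathfrak c}{\mathfrak d}^{-1})^{-1}$ is right), Steinitz's theorem, and the unit-determinant correction, is the standard route and in substance the one in \cite{G}; it supplies exactly the half of the statement the paper leaves to the reference. Two details to make explicit in a final write-up: the identification $\Gamma(O;O,{\mathfrak c}^{-1})=\mathrm{Aut}_O(M)\cap GL_2^+(F)$ requires noting that $\det\gamma\in O$ automatically from the entry conditions and that imposing the same conditions on $\gamma^{-1}$ forces $\det\gamma\in O^{\times}$; and the identity ${\mathfrak I}_1\cap{\mathfrak I}_2={\mathfrak I}_1{\mathfrak I}_2({\mathfrak I}_1+{\mathfrak I}_2)^{-1}$ does not literally apply when $a=0$ or $c=0$ (e.g.\ at the cusp $\infty$), though the conclusion $\Lambda=(aO+c{\mathfrak c}{\mathfrak d}^{-1})^{-1}$ still holds by direct inspection in those cases.
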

One can find a detailed proof of the proposition in \cite{G}. However we need a slightly refined version of the surjectivity of $il_{\mathfrak c}$ later on so as to compute the constant terms, so we will review the proof of the surjectivity in Proposition \ref{prop:309}. 

Now we apply Proposition \ref{prop:308} for ${\mathfrak c}={\mathfrak t}_\lambda^{-1}$. Note that $\Gamma_\lambda^1(O)=\Gamma(O; O, {\mathfrak t}_\lambda)$ by definition. In the light of Proposition \ref{prop:308}, what we have computed in the previous subsection is a constant term of $E_k(\eta, \psi)$ at \emph{one} equivalence class of cusps of $\Gamma_\lambda^1(O)$, that is, the equivalence class of $\infty$. We will compute the constant terms at \emph{all} equivalence classes of cusps of $\Gamma_\lambda^1(O)$ in the next subsection. 


\subsection{Constant terms of Eisenstein series under slash operators I\hspace{-.1em}I}
Hereafter we fix $\lambda \in {\rm Cl}_F^+$. As declared at the end of the previous subsection, we compute the constant terms of $E_k(\eta, \psi)$ at all equivalence classes of cusps of $\Gamma_\lambda^1(O)$. We choose an element in ${\rm Cl}_F$ and fix its representative integral ideal ${\mathfrak r}_0$. We may assume that ${\mathfrak r}_0$ is prime to ${\mathfrak m}$. We shall prove a slightly refined version of the surjectivity of the map (\ref{eqn:336}), with ${\mathfrak c}={\mathfrak t}_\lambda^{-1}$. 
\begin{proposition}
\label{prop:309}
We can choose a matrix
\begin{align*}
 A_\lambda &= \left(
\begin{array}{cc}
 \alpha_\lambda & \beta_\lambda \\
 \gamma_\lambda & \delta_\lambda
\end{array}
\right) \in SL_2(F)
\end{align*}
with $il_{{\mathfrak t}_\lambda^{-1}}(A_\lambda)={\mathfrak r}_0$ so that
$$\alpha_\lambda O={\mathfrak n}_2{\mathfrak r}_0, \ \beta_\lambda \in ({\mathfrak d}{\mathfrak t}_\lambda{\mathfrak r}_0)^{-1}, \ \gamma_\lambda O={\mathfrak n}_1{\mathfrak d}{\mathfrak t}_\lambda{\mathfrak r}_0 \ \text{and} \ \delta_\lambda \in {\mathfrak r}_0^{-1}.$$
Here ${\mathfrak n}_i \ (i=1, 2)$ are integral ideals such that ${\mathfrak n}_2$ is prime to ${\mathfrak n}_1$. Furthermore, the ideal ${\mathfrak n}_1$ can be chosen so that ${\mathfrak n}_1$ is prime to ${\mathfrak b}$. 
\end{proposition}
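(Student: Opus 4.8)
The plan is to build $A_\lambda$ entry by entry: control the two off-diagonal ideals by two applications of Lemma \ref{lem:208}, then force the determinant to equal $1$ using the coprimality ${\mathfrak n}_1+{\mathfrak n}_2=O$. The guiding identity is that of the map (\ref{eqn:336}) with ${\mathfrak c}={\mathfrak t}_\lambda^{-1}$, namely $il_{{\mathfrak t}_\lambda^{-1}}(A_\lambda)=\gamma_\lambda({\mathfrak d}{\mathfrak t}_\lambda)^{-1}+\alpha_\lambda O$. Once the prescribed factorizations $\gamma_\lambda O={\mathfrak n}_1{\mathfrak d}{\mathfrak t}_\lambda{\mathfrak r}_0$ and $\alpha_\lambda O={\mathfrak n}_2{\mathfrak r}_0$ hold, this reads $il_{{\mathfrak t}_\lambda^{-1}}(A_\lambda)=({\mathfrak n}_1+{\mathfrak n}_2){\mathfrak r}_0$, which collapses to ${\mathfrak r}_0$ exactly when ${\mathfrak n}_1$ and ${\mathfrak n}_2$ are coprime. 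So the whole construction is steered toward producing coprime ${\mathfrak n}_1,{\mathfrak n}_2$.

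First I would choose $\gamma_\lambda$. Applying Lemma \ref{lem:208} with ${\mathfrak a}={\mathfrak d}{\mathfrak t}_\lambda{\mathfrak r}_0$ and ${\mathfrak m}={\mathfrak b}$ yields a totally positive $\gamma_\lambda\in{\mathfrak d}{\mathfrak t}_\lambda{\mathfrak r}_0$ with $\gamma_\lambda O={\mathfrak d}{\mathfrak t}_\lambda{\mathfrak r}_0\,{\mathfrak n}_1$ and ${\mathfrak n}_1$ prime to ${\mathfrak b}$; this pins down ${\mathfrak n}_1$ and secures the final assertion of the proposition. With ${\mathfrak n}_1$ now fixed, I would invoke Lemma \ref{lem:208} a second time, this time with ${\mathfrak a}={\mathfrak r}_0$ and ${\mathfrak m}={\mathfrak n}_1$, to produce a totally positive $\alpha_\lambda\in{\mathfrak r}_0$ with $\alpha_\lambda O={\mathfrak r}_0{\mathfrak n}_2$ and ${\mathfrak n}_2$ prime to ${\mathfrak n}_1$. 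The ordering is essential: ${\mathfrak n}_1$ must be determined first so that the second application can be run against it. By the computation above, these two choices already give $il_{{\mathfrak t}_\lambda^{-1}}(A_\lambda)=({\mathfrak n}_1+{\mathfrak n}_2){\mathfrak r}_0={\mathfrak r}_0$.

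It then remains to supply $\beta_\lambda\in({\mathfrak d}{\mathfrak t}_\lambda{\mathfrak r}_0)^{-1}$ and $\delta_\lambda\in{\mathfrak r}_0^{-1}$ with $\alpha_\lambda\delta_\lambda-\beta_\lambda\gamma_\lambda=1$, which is the only genuine content. The key observation is that, since $\alpha_\lambda\neq 0$, as $\delta_\lambda$ runs over ${\mathfrak r}_0^{-1}$ the product $\alpha_\lambda\delta_\lambda$ runs bijectively over the full ideal $\alpha_\lambda{\mathfrak r}_0^{-1}=(\alpha_\lambda O){\mathfrak r}_0^{-1}={\mathfrak n}_2$, and likewise $\beta_\lambda\gamma_\lambda$ runs over $\gamma_\lambda({\mathfrak d}{\mathfrak t}_\lambda{\mathfrak r}_0)^{-1}={\mathfrak n}_1$. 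Hence the set of attainable values $\alpha_\lambda\delta_\lambda-\beta_\lambda\gamma_\lambda$ is precisely ${\mathfrak n}_2+{\mathfrak n}_1=O$, which contains $1$. Picking $n_1\in{\mathfrak n}_1$ and $n_2\in{\mathfrak n}_2$ with $n_2-n_1=1$ and setting $\delta_\lambda=n_2/\alpha_\lambda$, $\beta_\lambda=n_1/\gamma_\lambda$ makes $\det A_\lambda=1$, so $A_\lambda\in SL_2(F)$ with all four prescribed ideal conditions.

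The main obstacle is the simultaneous fulfilment of the $SL_2$-determinant condition together with the four ideal-membership constraints; everything hinges on recognizing that the two bilinear products sweep out the \emph{entire} ideals ${\mathfrak n}_2$ and ${\mathfrak n}_1$, so that the coprimality ${\mathfrak n}_1+{\mathfrak n}_2=O$ delivers the B\'ezout-type identity landing on $1$. Once this is seen the construction is forced, and no further computation beyond two applications of Lemma \ref{lem:208} is needed.
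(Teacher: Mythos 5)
Your construction is correct and follows essentially the same route as the paper: choose $\gamma_\lambda$ first so that ${\mathfrak n}_1$ is prime to ${\mathfrak b}$, then choose $\alpha_\lambda$ against ${\mathfrak n}_1$ via Lemma \ref{lem:208}, and use ${\mathfrak n}_1+{\mathfrak n}_2=O$ (equivalently $\gamma_\lambda({\mathfrak d}{\mathfrak t}_\lambda{\mathfrak r}_0)^{-1}+\alpha_\lambda{\mathfrak r}_0^{-1}=O$) to produce $\beta_\lambda,\delta_\lambda$ with determinant $1$. The only nuance is that Lemma \ref{lem:208} is stated for \emph{integral} ${\mathfrak a}$, whereas ${\mathfrak d}{\mathfrak t}_\lambda{\mathfrak r}_0$ need not be integral, so for the choice of $\gamma_\lambda$ one should note (as the paper does) that the same argument carries over to fractional ideals in a Dedekind domain.
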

\begin{proof}
Let ${\mathfrak r}_0$ be as above and ${\mathfrak b}=\prod_{i=1}^l {\mathfrak p}_i^{e_i}$ the prime ideal factorization of ${\mathfrak b}$. We can take a non-zero element $\gamma_\lambda \in {\mathfrak d}{\mathfrak t}_\lambda{\mathfrak r}_0$ so that $\gamma_\lambda \notin {\mathfrak p}_i{\mathfrak d}{\mathfrak t}_\lambda{\mathfrak r}_0$ for all $i=1, 2, \ldots, l$. This can be proved by an argument similar to that of Lemma \ref{lem:208}, as follows: we let ${\mathfrak c}={\mathfrak p}_1{\mathfrak p}_2 \cdots {\mathfrak p}_l{\mathfrak d}{\mathfrak t}_\lambda{\mathfrak r}_0$ and ${\mathfrak c}_i={\mathfrak c}{\mathfrak p}_i^{-1}$ for each $i=1, 2, \ldots, l$. Since ${\mathfrak c} \subsetneq {\mathfrak c}_i$ there exists $c_i \in {\mathfrak c}_i \setminus {\mathfrak c}$ for each $i$. Then $\gamma_\lambda=c_1+c_2+ \cdots+c_l$ does the job. Note that the ideal ${\mathfrak d}{\mathfrak t}_\lambda{\mathfrak r}_0$ is not necessarily integral, but the same argument as that of Lemma \ref{lem:208} works as long as $O$ is a Dedekind domain. We then write $\gamma_\lambda O={\mathfrak n}_1{\mathfrak d}{\mathfrak t}_\lambda{\mathfrak r}_0$ with ${\mathfrak n}_1$ integral and prime to ${\mathfrak b}$. Lemma \ref{lem:208} implies that there exists an element $\alpha_\lambda \in {\mathfrak r}_0$ such that $\alpha_\lambda O={\mathfrak n}_2{\mathfrak r}_0$ with ${\mathfrak n}_2$ integral and prime to ${\mathfrak n}_1$. Then we have $\gamma_\lambda({\mathfrak d}{\mathfrak t}_\lambda)^{-1}+\alpha_\lambda O={\mathfrak n}_1{\mathfrak r}_0+{\mathfrak n}_2{\mathfrak r}_0={\mathfrak r}_0$. Since this condition is equivalent to $\gamma_\lambda({\mathfrak d}{\mathfrak t}_\lambda {\mathfrak r}_0)^{-1}+\alpha_\lambda{\mathfrak r}_0^{-1}=O$, there exist $\beta_\lambda \in ({\mathfrak d}{\mathfrak t}_\lambda {\mathfrak r}_0)^{-1}$ and $\delta_\lambda \in {\mathfrak r}_0^{-1}$ such that $\alpha_\lambda \delta_\lambda-\beta_\lambda \gamma_\lambda=1$. This proves 
$$A_\lambda=\left(
\begin{array}{cc}
 \alpha_\lambda & \beta_\lambda \\
 \gamma_\lambda & \delta_\lambda
\end{array}
\right) \in SL_2(F) \ \ \text{and} \ \ il_{{\mathfrak t}_\lambda^{-1}}(A_\lambda)={\mathfrak r}_0. $$ 
\end{proof}

Proposition \ref{prop:308} tells us that it suffices to compute the constant term of $E_k(\eta, \psi)_\lambda|A_\lambda$ for $A_\lambda$ in Proposition \ref{prop:309}. We recall the definition of $(E_k(\eta, \psi)_\lambda|A_\lambda)(z, s)$:
\begin{align*}
 (E_k(\eta, \psi)_\lambda|A_\lambda)(z, s) &= C\tau(\psi)\frac{{\rm N}({\mathfrak t}_\lambda)^{-\frac{k}{2}}}{{\rm N}({\mathfrak b})} \sum_{{\mathfrak r} \in {\rm Cl}_F}{\rm N}({\mathfrak r})^k  \\
 & \times \sum_{\substack{
 a \in {\mathfrak r}, \\
 b \in ({\mathfrak b}{\mathfrak d}{\mathfrak t}_\lambda)^{-1} {\mathfrak r}, \\
 (a, b) \bmod U, \\
 (a, b) \neq (0, 0)
}} \frac{{\rm sgn}(a)^q \eta(a {\mathfrak r}^{-1}) {\rm sgn}(-b)^r \psi^{-1}(-b {\mathfrak b}{\mathfrak d}{\mathfrak t}_\lambda{\mathfrak r}^{-1}) |\gamma_\lambda z+\delta_\lambda|^{2s}}{((a\alpha_\lambda+b\gamma_\lambda)z+(a\beta_\lambda +b\delta_\lambda))^k |(a\alpha_\lambda+b\gamma_\lambda)z+(a\beta_\lambda +b\delta_\lambda)|^{2s}}.
\end{align*}
As in the proof of Proposition \ref{prop:304}, we need to consider terms with $a\alpha_\lambda+b\gamma_\lambda=0$. For each ${\mathfrak r} \in {\rm Cl}_F$, we have $a\alpha_\lambda \in {\mathfrak n}_2{\mathfrak r}_0{\mathfrak r}$ and $b\gamma_\lambda \in {\mathfrak b}^{-1}{\mathfrak n}_1{\mathfrak r}_0{\mathfrak r}$. Noting that ${\mathfrak n}_1$ is prime to ${\mathfrak b}$, we see that $b\gamma_\lambda=-a\alpha_\lambda \in ({\mathfrak n}_2{\mathfrak r}_0{\mathfrak r}) \cap ({\mathfrak b}^{-1}{\mathfrak n}_1{\mathfrak r}_0{\mathfrak r})={\mathfrak n}_1{\mathfrak n}_2{\mathfrak r}_0{\mathfrak r}$ and hence $b \in {\mathfrak n}_2 ({\mathfrak d}{\mathfrak t}_\lambda)^{-1}{\mathfrak r}$. Consequently we have $\psi^{-1}(-b{\mathfrak b}{\mathfrak d}{\mathfrak t}_\lambda{\mathfrak r}^{-1})=0$ unless ${\mathfrak b}=O$. If this is the case, we use an isomorphism
\begin{align*}
 \left\{(a, b) \mid a \in {\mathfrak r}, \ b \in ({\mathfrak d}{\mathfrak t}_\lambda)^{-1}{\mathfrak r}, \ a\alpha_\lambda+b\gamma_\lambda=0 \right\}/U & \rightarrow ({\mathfrak d}{\mathfrak t}_\lambda{\mathfrak r}_0)^{-1}{\mathfrak r}/U; \\
 (a, b) & \mapsto a\beta_\lambda+b\delta_\lambda
\end{align*}
to compute (the inverse map is given by $d \mapsto (-d\gamma_\lambda, d\alpha_\lambda)$). The normalized constant term of $E_k(\eta, \psi)_\lambda|A_\lambda$ is equal to
\begin{align*}
 & C{\rm N}({\mathfrak t}_\lambda)^{-k} \sum_{{\mathfrak r} \in {\rm Cl}_F}{\rm N}({\mathfrak r})^k \sum_{\substack{
 d \in ({\mathfrak d}{\mathfrak t}_\lambda{\mathfrak r}_0)^{-1}{\mathfrak r}, \\
 d \bmod U, \\
 d \neq 0
}} {\rm sgn}(-d\gamma_\lambda)^q \eta(-d\gamma_\lambda {\mathfrak r}^{-1}){\rm N}(d)^{-k} \\
 &= C{\rm N}({\mathfrak d}{\mathfrak r}_0)^k {\rm sgn}(-\gamma_\lambda)^q \eta(\gamma_\lambda ({\mathfrak d}{\mathfrak t}_\lambda{\mathfrak r}_0)^{-1}){\rm N}({\mathfrak d}{\mathfrak t}_\lambda{\mathfrak r}_0)^k \sum_{{\mathfrak r} \in {\rm Cl}_F} \sum_{\substack{
 d \in ({\mathfrak d}{\mathfrak t}_\lambda{\mathfrak r}_0)^{-1}{\mathfrak r}, \\
 d \bmod U, \\
 d \neq 0
}} \eta(-d{\mathfrak d}{\mathfrak t}_\lambda {\mathfrak r}_0{\mathfrak r}^{-1}){\rm N}(d{\mathfrak d}{\mathfrak t}_\lambda {\mathfrak r}_0{\mathfrak r}^{-1})^{-k}.
\end{align*}
Combining this and the functional equation
\begin{align*}
 \frac{|d_F|^{\frac{1}{2}-k}{\rm N}({\mathfrak a})^{1-k}(2\pi i)^{kd}}{2^d \Gamma(k)^d \tau(\eta^{-1})}L(\eta^{-1}, 1-k) &= L(\eta, k) 
\end{align*} 
for $L(\eta, s)$ (see \cite{M}, Chapter 3, Section 3), we see that this value is equal to
\begin{align*}
 & \frac{1}{2^d}\tau(\eta)\left(\frac{{\rm N}({\mathfrak r}_0)}{{\rm N}({\mathfrak a})} \right)^k {\rm sgn}(-\gamma_\lambda)^q \eta({\mathfrak n}_1) L(\eta^{-1}, 1-k).
\end{align*}
We give a summary of our computation as a theorem.
\begin{theorem}
\label{thm:310}
\begin{itemize}
\item[(i)] For a matrix
\begin{align*}
 A_\lambda &= \left(
\begin{array}{cc}
 \alpha_\lambda & \beta_\lambda \\
 \gamma_\lambda & \delta_\lambda
\end{array}
\right) \in \Gamma_\lambda^1(O), 
\end{align*}
we write $\gamma_\lambda O={\mathfrak n}_1{\mathfrak d}{\mathfrak t}_\lambda$. Then the constant term of ${\rm N}({\mathfrak t}_\lambda)^{-\frac{k}{2}}E_k(\eta, \psi)_\lambda|A_\lambda$ is equal to $0$ unless ${\mathfrak b} \mid {\mathfrak n}_1$. If this is the case, the constant term is equal to
\begin{align*}
 & \frac{1}{2^d}\frac{\tau(\eta \psi^{-1})}{\tau(\psi^{-1})} \left(\frac{{\rm N}({\mathfrak b})}{{\rm N}({\mathfrak c})} \right)^k {\rm  sgn}(-\gamma_\lambda)^q \eta(\gamma_\lambda ({\mathfrak b}{\mathfrak d}{\mathfrak t}_\lambda)^{-1}) {\rm sgn}(\alpha_\lambda)^r \psi^{-1}(\alpha_\lambda) \\
 & \ \ \times L(\eta^{-1} \psi, 1-k) \prod_{{\mathfrak q} \mid {\mathfrak m}, \ {\mathfrak q} \nmid {\mathfrak c}} (1-\eta \psi^{-1}({\mathfrak q}){\rm N}({\mathfrak q})^{-k}). 
\end{align*}
\item[(ii)] Let
\begin{align*}
 {\mathfrak r}_0, \  
A_\lambda &= \left(
\begin{array}{cc}
 \alpha_\lambda & \beta_\lambda \\
 \gamma_\lambda & \delta_\lambda
\end{array}
\right) \in SL_2(F), \ {\mathfrak n}_i \ (i=1, 2)
\end{align*}
be as in Proposition $\ref{prop:309}$. Then the constant term of ${\rm N}({\mathfrak t}_\lambda)^{-\frac{k}{2}}E_k(\eta, \psi)_\lambda|A_\lambda$ is
\begin{align*}
  & \delta_{\psi, {\rm id}}\frac{1}{2^d}\tau(\eta)\left(\frac{{\rm N}({\mathfrak r}_0)}{{\rm N}({\mathfrak a})} \right)^k {\rm sgn}(-\gamma_\lambda)^q \eta({\mathfrak n}_1) L(\eta^{-1}, 1-k).
\end{align*}
\end{itemize}
\end{theorem}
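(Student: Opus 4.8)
The plan is to treat the two parts separately, since part (i) needs no new computation while part (ii) carries all the substance. For part (i), a matrix $A_\lambda \in \Gamma_\lambda^1(O) = \Gamma(O; O, {\mathfrak t}_\lambda) \cap SL_2(F)$ automatically satisfies $\gamma_\lambda \in {\mathfrak d}{\mathfrak t}_\lambda$, so writing $\gamma_\lambda O = {\mathfrak n}_1 {\mathfrak d}{\mathfrak t}_\lambda$ with ${\mathfrak n}_1$ integral, the divisibility hypothesis $\gamma_\lambda \in {\mathfrak b}{\mathfrak d}{\mathfrak t}_\lambda$ of Proposition \ref{prop:304} becomes ${\mathfrak n}_1 {\mathfrak d}{\mathfrak t}_\lambda \subseteq {\mathfrak b}{\mathfrak d}{\mathfrak t}_\lambda$, i.e. ${\mathfrak b} \mid {\mathfrak n}_1$. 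I would record this dictionary and then transcribe the formula of Proposition \ref{prop:304} verbatim.

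For part (ii) I would start from the explicit Shimura series for $(E_k(\eta, \psi)_\lambda | A_\lambda)(z, s)$ displayed in the proof of Proposition \ref{prop:207}, now with $A_\lambda$ the matrix produced in Proposition \ref{prop:309}. Slashing replaces the factor $az + b$ in each summand by $(a\alpha_\lambda + b\gamma_\lambda)z + (a\beta_\lambda + b\delta_\lambda)$, so the constant term at $\infty$ is assembled from exactly those pairs $(a, b)$ with $a\alpha_\lambda + b\gamma_\lambda = 0$. Using the ideal data $\alpha_\lambda O = {\mathfrak n}_2 {\mathfrak r}_0$ and $\gamma_\lambda O = {\mathfrak n}_1 {\mathfrak d}{\mathfrak t}_\lambda {\mathfrak r}_0$, with ${\mathfrak n}_1$ prime to ${\mathfrak b}$, I would locate these pairs: for fixed ${\mathfrak r}$ one has $a\alpha_\lambda \in {\mathfrak n}_2 {\mathfrak r}_0 {\mathfrak r}$ and $b\gamma_\lambda \in {\mathfrak b}^{-1}{\mathfrak n}_1 {\mathfrak r}_0 {\mathfrak r}$, and the coprimality ${\mathfrak n}_1 + {\mathfrak b} = O$ forces the intersection to be exactly ${\mathfrak n}_1 {\mathfrak n}_2 {\mathfrak r}_0 {\mathfrak r}$, hence $b \in {\mathfrak n}_2 ({\mathfrak d}{\mathfrak t}_\lambda)^{-1}{\mathfrak r}$.

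The key step — and the one I expect to be the main obstacle — is the vanishing producing the factor $\delta_{\psi, {\rm id}}$. From $b \in {\mathfrak n}_2 ({\mathfrak d}{\mathfrak t}_\lambda)^{-1}{\mathfrak r}$ the ideal $-b{\mathfrak b}{\mathfrak d}{\mathfrak t}_\lambda {\mathfrak r}^{-1}$ is integral and divisible by ${\mathfrak b}$; since $\psi$ is primitive of conductor ${\mathfrak b}$, the value $\psi^{-1}(-b{\mathfrak b}{\mathfrak d}{\mathfrak t}_\lambda {\mathfrak r}^{-1})$ vanishes unless ${\mathfrak b} = O$, i.e. unless $\psi = {\rm id}$. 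Conceptually this is the sharp contrast with the cusp $\infty$ of Proposition \ref{prop:304}: the extra coprime factor ${\mathfrak n}_1$ attached to a cusp whose class ${\mathfrak r}_0$ differs from that of $\infty$ pushes every contributing $b$ into ${\mathfrak b}$, so only the trivial character survives. The delicate point is to carry out the intersection of ideals cleanly and to confirm that it is precisely the primitivity of $\psi$ that forces the vanishing.

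Finally, assuming ${\mathfrak b} = O$ (so $r = 0$ and the leading factor $C\tau(\psi)/{\rm N}({\mathfrak b})$ reduces to $C$), I would reparametrize the locus $a\alpha_\lambda + b\gamma_\lambda = 0$ by the bijection $(a, b) \mapsto a\beta_\lambda + b\delta_\lambda =: d$ onto $({\mathfrak d}{\mathfrak t}_\lambda {\mathfrak r}_0)^{-1}{\mathfrak r}/U$, with inverse $d \mapsto (-d\gamma_\lambda, d\alpha_\lambda)$. Substituting $a = -d\gamma_\lambda$ into ${\rm sgn}(a)^q \eta(a{\mathfrak r}^{-1})$ and summing over ${\mathfrak r} \in {\rm Cl}_F$ and $d$, the ideals $d{\mathfrak d}{\mathfrak t}_\lambda {\mathfrak r}_0 {\mathfrak r}^{-1}$ run through all nonzero integral ideals, so the double sum collapses to $L(\eta, k)$; the Euler product of Proposition \ref{prop:304} is empty here because ${\mathfrak m} = {\mathfrak a} = {\rm cond}(\eta^{-1})$. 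After pulling out the sign factor ${\rm sgn}(-\gamma_\lambda)^q$, the value $\eta({\mathfrak n}_1)$, and the norm factors, the last move is the functional equation for $L(\eta, s)$, which converts $L(\eta, k)$ into $L(\eta^{-1}, 1-k)$ and the analytic constants into $2^{-d}\tau(\eta)({\rm N}({\mathfrak r}_0)/{\rm N}({\mathfrak a}))^k$. This is routine but demands care with the Gauss-sum, discriminant, and $[O^\times : U]$ normalizations, exactly as in the passage from (\ref{eqn:332}) to (\ref{eqn:333}).
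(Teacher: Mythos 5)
Your proposal is correct and follows essentially the same route as the paper: part (i) is indeed just the dictionary $\gamma_\lambda\in{\mathfrak b}{\mathfrak d}{\mathfrak t}_\lambda\Leftrightarrow{\mathfrak b}\mid{\mathfrak n}_1$ applied to Proposition \ref{prop:304}, and for part (ii) the paper likewise isolates the locus $a\alpha_\lambda+b\gamma_\lambda=0$, uses the coprimality data of Proposition \ref{prop:309} to show every contributing $b$ satisfies $b{\mathfrak b}{\mathfrak d}{\mathfrak t}_\lambda{\mathfrak r}^{-1}\subset{\mathfrak b}$ (whence the $\delta_{\psi,{\rm id}}$), and then reparametrizes via $(a,b)\mapsto a\beta_\lambda+b\delta_\lambda$ and applies the functional equation to convert $L(\eta,k)$ into $L(\eta^{-1},1-k)$. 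The only minor point worth noting is that the identification of the intersection as ${\mathfrak n}_1{\mathfrak n}_2{\mathfrak r}_0{\mathfrak r}$ uses ${\mathfrak n}_1+{\mathfrak n}_2=O$ as well as ${\mathfrak n}_1+{\mathfrak b}=O$, both of which Proposition \ref{prop:309} provides.
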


\appendix
\section{A lemma on a ${\mathbb{Z}}$-lattice in ${\mathbb{R}}^d$}
In this appendix, we show the following lemma which was assumed in the proof of Lemma \ref{lem:208}. The lemma is quite elementary but we give a proof in order to be precise.  
\begin{lemma}
Let $d \geq 1$ be an integer. For any ${\mathbb{Z}}$-lattice $L$ of ${\mathbb{R}}^d$, we can choose a ${\mathbb{Z}}$-basis $v_1, v_2, \ldots, v_d$ of $L$ so that $v_1, v_2, \ldots, v_d \in ({\mathbb{R}}_{>0})^d$. 
\end{lemma}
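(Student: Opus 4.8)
The plan is to reduce the statement to the existence of a single \emph{primitive} lattice vector lying in the open positive orthant $(\mathbb{R}_{>0})^d$, and then to push the remaining members of an arbitrary basis into the positive orthant by adding large multiples of that one good vector. So the first thing I would establish is that $L \cap (\mathbb{R}_{>0})^d \neq \emptyset$. Here I would use that $L$ is a full-rank lattice, so the quotient $\mathbb{R}^d/L$ is compact and $L$ has finite covering radius $R$ (every point of $\mathbb{R}^d$ lies within distance $R$ of some point of $L$). The orthant $(\mathbb{R}_{>0})^d$ is an open cone, hence contains an open ball $B(p,r)$; since it is a cone, $t \cdot B(p,r) = B(tp, tr) \subseteq (\mathbb{R}_{>0})^d$ for every $t > 0$. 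Choosing $t$ with $tr > R$ forces a lattice point into $B(tp,tr)$, and this point has all coordinates strictly positive.

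Next I would arrange primitivity. Given $v \in L \cap (\mathbb{R}_{>0})^d$, the intersection $\mathbb{R}v \cap L$ is a rank-one lattice $\mathbb{Z}v_1$, so $v = m v_1$ with $v_1$ primitive and (after choosing the sign of $v_1$) $m \in \mathbb{Z}_{\geq 1}$. Then $v_1 = v/m$ still has all coordinates strictly positive, since dividing by a positive integer preserves positivity. Because $v_1$ is primitive, the standard fact that a primitive vector extends to a $\mathbb{Z}$-basis of a free module lets me complete it to a basis $v_1, u_2, \ldots, u_d$ of $L$.

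Finally I would clean up the remaining vectors. For $i = 2, \ldots, d$ set $v_i := u_i + N v_1$ for one common integer $N$; the change of basis from $(v_1, u_2, \ldots, u_d)$ to $(v_1, v_2, \ldots, v_d)$ has determinant $1$ and integer entries, hence is unimodular, so $v_1, \ldots, v_d$ is again a $\mathbb{Z}$-basis of $L$. Since every coordinate of $v_1$ is strictly positive, each coordinate of $v_i$ equals $(u_i)_j + N(v_1)_j$, which is positive once $N$ exceeds the finitely many thresholds $-(u_i)_j / (v_1)_j$; together with the already-positive $v_1$, all basis vectors then lie in $(\mathbb{R}_{>0})^d$.

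The one genuinely nontrivial step is the first: producing a lattice point inside the open positive orthant, which is where the global geometry of the lattice (compactness of $\mathbb{R}^d/L$, or equivalently finiteness of the covering radius) is needed. Once such a point is in hand, passing to a primitive vector, extending to a basis, and the final shearing are all routine linear algebra over $\mathbb{Z}$, so I expect no further obstacles there.
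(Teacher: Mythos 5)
Your proof is correct. The opening step --- producing a single lattice point in the open positive orthant by exploiting compactness of ${\mathbb{R}}^d/L$ and the fact that the orthant, being an open cone, contains balls of arbitrarily large radius --- is essentially the same device the paper uses (there phrased as translating a bounded fundamental domain $\Omega$ into $({\mathbb{R}}_{>0})^d$ and picking a lattice point in the translate). After that the two arguments genuinely diverge. The paper iterates the translation trick to produce $d$ linearly independent lattice vectors $x_1,\ldots,x_d$ all lying in the positive orthant, and then upgrades this linearly independent system to a basis by a covolume-minimization argument: among all such systems choose one with ${\rm covol}(L')$ minimal, and show that any $x\in L\setminus L'$ would yield a sublattice of strictly smaller covolume. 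Your route instead takes the one positive lattice point, passes to the primitive vector $v_1$ on its ray (which stays positive), invokes the standard fact that a primitive vector in a free ${\mathbb{Z}}$-module extends to a basis, and then applies the unimodular shear $u_i\mapsto u_i+Nv_1$ for $N$ large to push the remaining basis vectors into the orthant. Your version trades the geometric minimization for two pieces of routine ${\mathbb{Z}}$-linear algebra (basis extension of a primitive vector and a determinant-one shear), and it only needs one application of the ``positive lattice point exists'' step rather than $d$ of them; the paper's version is self-contained in that it never appeals to the primitive-vector extension theorem. Both are complete proofs.
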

\begin{proof}
Let $\Omega$ be a fundamental domain of the lattice $L$. It is well known that ${\rm covol}(L)={\rm vol}(\Omega)$ is independent of the choice of $\Omega$. Since ${\mathbb{R}}^d/L$ is compact, $\Omega$ is bounded and hence there exists a point $y_1 \in {\mathbb{R}}^d$ such that $y_1+\Omega \subset ({\mathbb{R}}_{>0})^d$. We choose a point $x_1 \in (y_1+\Omega) \cap L$ and define the one dimensional subspace $V_1={\mathbb{R}}x_1$ of ${\mathbb{R}}^d$ generated by $x_1$. If $d \geq 2$, there exists a point $y_2 \in {\mathbb{R}}^d$ such that $y_2+\Omega \subset ({\mathbb{R}}_{>0})^d \setminus (({\mathbb{R}}_{>0})^d \cap  V_1)$. We choose a point $x_2 \in (y_2+\Omega) \cap L$. By our choice of $y_2$, $x_1$ and $x_2$ are linearly independent over ${\mathbb{R}}$. Therefore the subspace $V_2={\mathbb{R}}x_1+{\mathbb{R}}x_2$ of ${\mathbb{R}}^d$ is two dimensional over ${\mathbb{R}}$. By iterating this operation, we obtain ${\mathbb{R}}$-linearly independent $d$ elements $x_1, x_2, \ldots, x_d$ of $L$. We put $L'=\bigoplus_{i=1}^d {\mathbb{Z}}x_i$. Obviously $L'$ is a sublattice of $L$. We choose $x_1, x_2, \ldots, x_d \in L$ so that ${\rm covol}(L')$ is minimal among such $x_1, x_2, \ldots, x_d$ (this is possible because $L$ is discrete in ${\mathbb{R}}^d$). We will prove that $L'=L$. 

Suppose not. Then there exists an element $x$ of $L$ not contained in $L'$. We have $x=\sum_{i=1}^d a_i x_i$ for some $a_1, a_2, \ldots, a_d \in {\mathbb{R}}$. By considering a translate of $x$ by an element of $L'$, we may assume that $0 \leq a_i<1$ for all $i$ and $0<a_j<1$ for some $j$. Then $x_1, \ldots, x_{j-1}, x, x_{j+1}, \ldots, x_d$ are linearly independent over ${\mathbb{R}}$. We define the sublattice $L''$ of $L$ by $L''=\left(\bigoplus_{1 \leq i \leq d, \ i \neq j} {\mathbb{Z}} x_i \right) \oplus {\mathbb{Z}}x$. Then we have
\begin{align*}
 \left(
\begin{array}{c}
 x_1 \\
 \vdots \\
 x_{j-1} \\
 x \\
 x_{j+1} \\
 \vdots \\
 x_d
\end{array}
\right) &= \left(
\begin{array}{ccccccc}
 1 &  & & & & &  \\
  & \ddots & & & & & \\
  & & 1 & & & &  \\
a_1 & \ldots & a_{j-1} & a_j & a_{j+1} & \ldots & a_d \\
 & & & & 1 & & \\
 & & & & & \ddots & \\
 & & & & & & 1
\end{array}
\right) \left(
\begin{array}{c}
 x_1 \\
 \vdots \\
 x_{j-1} \\
 x_j \\
 x_{j+1} \\
 \vdots \\
 x_d
\end{array}
\right)
\end{align*}
and thus ${\rm covol}(L'')=a_j{\rm covol}(L')<{\rm covol}(L')$. This contradicts the choice of $x_1, x_2, \ldots, x_d$. Therefore we have $L'=L$. 
\end{proof}


\end{document}